\DeclareFontFamily{U}{mathx}{\hyphenchar\font45}
\DeclareFontShape{U}{mathx}{m}{n}{
      <5> <6> <7> <8> <9> <10>
      <10.95> <12> <14.4> <17.28> <20.74> <24.88>
      mathx10
      }{}
\DeclareSymbolFont{mathx}{U}{mathx}{m}{n}
\DeclareMathAccent{\widecheck}{0}{mathx}{"71}
\DeclareMathAccent{\wideparen}{0}{mathx}{"75}
\subjclass[2010]{}
\keywords{}
\newcommand{\N}{\mathbb{N}}
\renewcommand{\restriction}{{{\upharpoonright}}}
\newcommand{\struc}[1]{\langle #1 \rangle}
\newcommand{\alg}[1]{\mathrm{\mathbf #1}} 
\newcommand{\str}[1]{#1}
\newcommand{\var}[1]{\mathcal{#1}}
\newcommand{\classop}[1]{\mathbb{#1}}
\newcommand{\cate}[1]{\mathcal{#1}}
\newcommand{\card}[1]{\vert #1 \vert}
\newcommand{\dom}{\mathrm{dom}}
\newcommand{\im}{\mathrm{Im}}
\newcommand{\Con}[1]{\mathrm{Con}(#1)}
\newcommand{\pro}{\mathrm{\mathbf{pro}}}
\renewcommand{\mod}{\, \mathrm{mod}\, }
\newcommand{\me}{\mathrm{\bf m}}
\newcommand{\lowerset}[1]{#1{\downarrow}}
\newcommand{\uperset}[1]{#1{\uparrow}}
\renewcommand{\emptyset}{\varnothing}
\newtheorem{thm}{Theorem}[section]
\newtheorem{cor}[thm]{Corollary}
\newtheorem{lem}[thm]{Lemma}
\newtheorem{prop}[thm]{Proposition}
\newtheorem{ass}[thm]{Assumption}
\theoremstyle{definition}
\newtheorem{defn}[thm]{Definition}
\newtheorem{nota}[thm]{Notation}
\theoremstyle{remark}
\newtheorem{rem}[thm]{Remark}
\newtheorem{ex}[thm]{Example}
\begin{document}
\title[Ext. maps to profinite compl. in finit. gen. quasivarieties]{Extending maps to profinite completions in finitely generated quasivarieties}

\author{Georges \textsc{Hansoul}}
\address{Département de Mathématiques, Université de Liège, Grande Traverse, 12, 4000 Liège, Belgium}
\email{g.hansoul@uliege.be}
\author{Bruno \textsc{Teheux}}
\address{Mathematics Research Unit, FSTC, University of Luxembourg, 6, Rue Coudenhove-Kalergi, L-1359 Luxembourg, Luxembourg}
\email{bruno.teheux@uni.lu}

\keywords{Profinite completions, natural dualities, natural extensions, canonical extensions}
\subjclass[2010]{03C05, 08C20}
\begin{abstract}
We consider the problem of extending maps from algebras to their profinite completions in finitely generated quasivarieties. Our developments are based on the construction of the profinite completion of an algebra as its natural extension. We provide an extension which is a multi-map and we study its continuity properties, and  the conditions under which it is a map.
\end{abstract}
\maketitle

\section{Introduction}

This paper is a contribution to the study of profinite completions in internally residually finite prevarieties. A class $\var{A}$ of algebras is called  \cite{Davey2011} an \emph{internally residually finite prevariety} (\emph{IRF-prevariety} for short) if there is a  set $\mathcal{M}$ of finite algebras such that $\var{A}=\classop{ISP}(\mathcal{M})$. Every algebra $\alg{A}$ of an IRF-prevariety $\var{A}$ embeds in its \emph{$\var{A}$-profinite completion $\pro_\var{A}(\alg{A})$}, which is defined  as the inverse limit of the inverse system of the finite quotients of $\alg{A}$ that belongs to $\var{A}$, with natural homomorphisms as bonding maps (see Section \ref{sec:prel} for details). In what follows, we limit ourself to those IRF-prevareties $\var{A}$ that are \emph{finitely generated quasivarieties}, \emph{i.e.}, for which there is a \emph{finite set} $\mathcal{M}$ of finite algebras such that $\var{A}=\classop{ISP}(\mathcal{M})$. Moreover, we assume that $\var{A}=\classop{ISP}(\{\alg{M}\})$, but this restriction is a matter of convenience: we claim that our developments admit the obvious generalization to the multi-sorted case where $\mathcal{M}=\{\alg{M}_1, \ldots, \alg{M}_n\}$. 

It is proved in \cite{Davey2011} that $\pro_\var{A}(\alg{A})$ is isomorphic to the \emph{natural extension $\alg{A}^\delta$ of $\alg{A}$}, that is,  the topological closure of $e_\alg{A}(\alg{A})$ in $\alg{M}_\iota^{\alg{A}^*}$, where $\alg{A}^*=\var{A}(\alg{A}, \alg{M})$, where $e_\alg{A}\colon \alg{A} \to \alg{M}^{\alg{A}^*}$ is the evaluation map defined as $e_{\alg{A}}(a)(\phi)=\phi(a)$, and where $\iota$ is the discrete topology on $\alg{M}$ (this representation result actually holds in any IRF-prevariety). Moreover, if $\utilde{M}$ is a discrete structure that yields a natural duality for $\var{A}$, and if $\alg{A}^*$ is considered as a (closed) substructure of $\utilde{M}^\alg{A}$, then $\alg{A}^\delta$ can  be concretely computed as the algebra of structure preserving maps from $\alg{A}^*$ to $\utilde{M}$ \cite[Theorem 4.3]{Davey2011}. With these results in mind, we adopt the notation $\alg{A}^\delta$ to denote $\pro_\var{A}(\alg{A})$ for the remaining of the paper.

We consider the following problem: given $\alg{A}, \alg{B} \in \var{A}$ and a map $u \colon \alg{A} \to \alg{B}$, how to define a `reasonable' extension $u^\delta \colon \alg{A}^\delta \to \alg{B}^\delta$ of $u$? Such an extension would allow to study profinite completions of expansions  of $\var{A}$-algebras, and preservation of equations through profinite completions. This problem has a well known solution  \cite{Gehrke2004} in the particular case where $\var{A}=\var{DL}=\classop{ISP}(\alg{2})$ is the  variety of bounded distributive lattices,  in which profinite completions coincide with canonical extensions. In this particular case, the theory of canonical extensions provides with a lower and an upper extension of any map $u\colon \alg{L} \to \alg{L'}$ to the canonical extensions of $\alg{L}$ and $\alg{L}'$. However, in the more general setting of non lattice-based algebras, no method of extension of maps from algebras to their profinite completions has  yet been developed.

The paper is organized as follows. In section \ref{sec:prel}, we recall some results about profinite completions in IRF-prevarieties, and we set up the notations. In section \ref{sec:topo}, we introduce a new topology $\delta$ on $\alg{A}^\delta$ such that $\alg{A}$ is the largest discrete subspace of $\alg{A}^\delta_\delta$. We prove that this topology boils down to the existing one  \cite{Geh00, Gehrke2011} in the specific case $\var{A}=\var{DL}$. Finally, we prove that if $\utilde{M}$ yields a logarithmic full duality for $\var{A}$, then the construction of profinite completions (\emph{alias}, natural extensions) commutes with the one of finite Cartesian products. We generalize this result to Boolean products in the Appendix.

Section \ref{sec:maps} is the core of the paper. We work under the more restrictive assumption that there is a discrete topological structure $\utilde{M}$ that yields a  logarithmic duality for $\alg{A}$ and that $\utilde{M}$ is injective in the dual category $\classop{IS}_c\classop{P}(\utilde{M})$. Given a map $u\colon \alg{A} \to \alg{B}$, we use the topology $\delta$ to define an extension $\widetilde{u}$ of $u$ on $\alg{A}^\delta$. In  general, the map $\widetilde{u}$ is not valued on $\alg{B}^\delta$ but $\widetilde{u}(x)$ is a closed subspace of $\alg{B}^\delta_\iota$ for every $x\in \alg{A}^\delta$ (where $\iota$ is the topology inherited from $\utilde{M}^{\var{A}(\alg{B}, \alg{M})}$). It means that $\widetilde{u}$ has to be considered as a multi-map between $\alg{A}^\delta$ and $\alg{B}^\delta$ rather than a map. Nevertheless,  under additional continuity assumptions, we show that $\widetilde{u}$ can be turned into a map valued in $\alg{B}^\delta$, a property that we call \emph{smoothness}. 

In Section \ref{sec:compo}, we study how the construction of $\widetilde{u}$ interacts with function compositions. We illustrate our developments by considering a sample case, namely, the case where $\var{A}$ is the variety of median algebras  (a non lattice-based variety). In particular, we  exhibit an example of smooth map that is not a homomorphism nor an operation of the type of the algebra. We also prove that  median algebras whose profinite completion is Boolean are exactly the Boolean powers of the 2-element  median algebra.

In section \ref{sec:order}, we consider the special case where $\alg{M}$ can be equipped with a total order in such a way that every $\alg{A}\in \var{A}$ can be considered locally has a (semi)lattice. We also show how the construction of $\widetilde{u}$ shed lights on the existence of an upper and a lower extension of $u$ in the case of distributive lattices. We close the paper by some concluding remarks and topics of further research.

\section{Preliminaries}\label{sec:prel}

We work under the general setting of \cite{NDFWA}. Let $\mathcal{M}=\{\alg{M}_1, \ldots, \alg{M}_n\}$ be a finite set of finite algebras of the same type,  and let $\var{A}$ be the prevariety $\classop{ISP}(\mathcal{M})$. In what follows we assume for convenience that $\mathcal{M}=\{\alg{M}\}$.  We denote by $\utilde{M}$ an alter-ego of $\alg{M}$, \emph{i.e.}, a topological structure
\[
\utilde{M}=\struc{M, G\cup H \cup R, \iota},
\]
where $\iota$ is the discrete topology on $M$, and $G$, $H$ and $R$ are respectively a set (possibly empty) of algebraic operations, algebraic partial operations (with nonempty domain), and algebraic (nonempty) relations on $\alg{M}$, respectively. We use $\cate{X}$ to denote the topological prevariety $\classop{IS}_c\classop{P}(\utilde{M})$, that is, the class of topological structures that are isomorphic to a closed substructure of a nonempty power of $\utilde{M}$. For any $X, Y \in \cate{X}$ we denote by $\cate{X}(X,Y)$ the set of the structure preserving continuous maps $f:X\to Y$. We use $X_*$ to denote  $\cate{X}(X,\utilde{M})$.

For any $\alg{A}\in \var{A}$, we denote by $\alg{A}^*$ the set $\var{A}(\alg{A}, \alg{M})$ of the homomorphisms from $\alg{A}$ to $\alg{M}$.  The Preduality Theorem (\cite[Theorem 5.2]{NDFWA}) states that  if $\alg{A}^*$ inherits the structure and the topology from $\utilde{M}^A$, then $\alg{A}^*\in \cate{X}$. Moreover, the evaluation map 
\[
e_\alg{A}:\alg{A}\to (\alg{A}^*)_*:a \mapsto e_{\alg{A}}(a):\phi \mapsto \phi(a)
\]
is an $\var{A}$-embedding.  Similarly, for any $X\in \cate{X}$ the map
\[
\epsilon_{X}:X\to (X_*)^*: \phi \mapsto \epsilon_{X}(\phi):x \mapsto x(\phi)
\]
is a $\cate{X}$-embedding.

If $\tau$ is a topology on a set $X$, we denote by $\struc{X,\tau}$ or $X_\tau$ the corresponding topological space. In particular, we denote by $\alg{M}_\iota$  the topological algebra obtained by equipping $\alg{M}$ with the discrete topology. For any set $X$, the notation $\alg{M}^X_\iota$ stands for the power algebra $\alg{M}^X$ equipped with the product topology induced by $\iota$ on $M$.  We denote by $\var{A}_\iota$ the category of topological algebras that are isomorphic to a closed subalgebra of a nonempty power of $\alg{M}_\iota$ with continuous homomorphisms as arrows. For every $\alg{A}\in \var{A}$, the map $e_\alg{A}$ identifies $\alg{A}$ with the subspace $e_\alg{A}(\alg{A})$  of $\alg{M}_\iota^{\alg{A}^*}$, and we usually consider $\alg{A}$ up to this identification.
   
\begin{defn}[\cite{Davey2011}]\label{def:nat}
The \emph{natural extension} of $\alg{A}\in \var{A}$, in notation $\alg{A}^\delta$,  is the topological closure of $\alg{A}$ in $\alg{M}^{\alg{A}^*}_\iota$. The algebra  $\alg{A}^\delta$ is turned into an element $\alg{A}^\delta_\iota$ of $\var{A}_\iota$ by considering it as a subspace of  $\alg{M}^{\alg{A}^*}_\iota$.
\end{defn}

When $\utilde{M}$ yields a natural duality for $\var{A}$, \emph{i.e.}, when the map $e_{\alg{A}}\colon \alg{A} \to (\alg{A}^*)_*$ is an isomorphism for every $\alg{A}\in \var{A}$, then Proposition \ref{prop:init} shows how to explicitly construct $\alg{A}^\delta$ from $\alg{A}^*$ without relying on any notion of (topological) limit.
For any topological structure $\str{X}$ and any topological algebra $\alg{A}$, we denote respectively by $\str{X}^\flat$ and $\alg{A}^\flat$ the structure obtained from $\str{X}$ and the algebra obtained from $\alg{A}$ by dropping the topology. 
We denote by $\cate{X}^\flat$ the category whose objects are the $X^\flat$ where $X\in \cate{X}$ with structure preserving maps as arrows. By abuse of notation, we write $\cate{X}^\flat(\str{X}, \str{Y})$ instead of $\cate{X}^\flat(\str{X}^\flat, \str{Y}^\flat)$. Note that $\cate{X}^\flat(\alg{A}^*, \utilde{M})$ is  a closed subalgebra of $\alg{M}^{\alg{A}^*}_\iota$ for every $\alg{A}\in \var{A}$.

\begin{prop}[{\cite[Theorems 3.6 and 4.3]{Davey2011}}]\label{prop:init} Assume that $\alg{A}\in \var{A}$.
\begin{enumerate}
\item\label{it:init01} The definition of $\alg{A}^\delta_\iota$ is independent of the algebraic structure $G\cup H\cup R$ used to defined $\utilde{M}$ and of the algebra $\alg{M}$ used to define $\var{A}$.
\item\label{it:init02} If in addition $\utilde{M}$ yields a duality for $\var{A}$ then $\alg{A}^\delta_\iota$ is isomorphic to $\struc{\cate{X}^\flat(\alg{A}^*, \utilde{M}), \iota}$.
\end{enumerate}
\end{prop}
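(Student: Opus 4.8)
The plan is to treat the two assertions separately, reducing the first to the profinite description of the natural extension and proving the second by a finite-approximation argument that feeds the duality hypothesis through the finite quotients of $\alg{A}$.

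For part (1), I would first observe that independence from the structure $G\cup H\cup R$ is immediate: Definition \ref{def:nat} only refers to the set $\alg{A}^*=\var{A}(\alg{A},\alg{M})$ and to the power algebra $\alg{M}^{\alg{A}^*}_\iota$, neither of which mentions $G$, $H$ or $R$. The substance is therefore independence from the generating algebra $\alg{M}$. Here I would identify $\alg{A}^\delta_\iota$ with the profinite completion of $\alg{A}$ relative to $\var{A}$. Concretely, for each finite $F\subseteq\alg{A}^*$ let $r_F\colon\alg{A}\to\alg{M}^F$ be $a\mapsto(\phi(a))_{\phi\in F}$ and let $\alg{B}_F=\im(r_F)$, a finite member of $\var{A}$; the standard characterization of closure in a product of discrete spaces gives that $\alpha$ lies in the closure of $e_\alg{A}(\alg{A})$ iff $\alpha\restriction F\in\alg{B}_F$ for every finite $F$, so that $\alg{A}^\delta_\iota\cong\varprojlim_F\alg{B}_F$ as topological algebras. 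The key point is then a cofinality argument: a congruence $\theta$ of $\alg{A}$ has finite quotient in $\var{A}$ exactly when $\theta$ is a finite intersection of kernels of maps in $\alg{A}^*$ (because a finite algebra of $\classop{ISP}(\alg{M})$ embeds into a finite power of $\alg{M}$), so the $\ker r_F$ are cofinal among the finite-index $\var{A}$-congruences. Hence $\varprojlim_F\alg{B}_F$ coincides with $\varprojlim\{\alg{A}/\theta:\theta\in\Con{\alg{A}},\ \alg{A}/\theta\text{ finite and in }\var{A}\}$, an object described purely in terms of $\alg{A}$ and $\var{A}$, whence independence from $\alg{M}$.

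For part (2), I would show that the closure of $e_\alg{A}(\alg{A})$ equals $\cate{X}^\flat(\alg{A}^*,\utilde{M})$ as topological algebras. One inclusion is the remark already recorded after Definition \ref{def:nat}: $\cate{X}^\flat(\alg{A}^*,\utilde{M})$ is closed in $\alg{M}^{\alg{A}^*}_\iota$ and contains $e_\alg{A}(\alg{A})$, hence contains its closure. For the reverse inclusion, fix $\alpha\in\cate{X}^\flat(\alg{A}^*,\utilde{M})$ and a finite $F=\{\phi_1,\dots,\phi_n\}\subseteq\alg{A}^*$; I must produce $a\in\alg{A}$ with $\phi_i(a)=\alpha(\phi_i)$ for all $i$. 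With $r_F$ and $\alg{B}_F$ as above, each $\phi_i$ factors as $\pi_i\circ r_F$ with $\pi_i\in\alg{B}_F^*$, so $F\subseteq r_F^*(\alg{B}_F^*)$, where $r_F^*\colon\alg{B}_F^*\to\alg{A}^*$ is the continuous embedding dual to the surjection $r_F$. Then $\alpha\circ r_F^*\colon\alg{B}_F^*\to\utilde{M}$ is structure preserving, and since $\alg{B}_F^*$ is finite (hence discrete) it is automatically continuous, i.e. $\alpha\circ r_F^*\in\cate{X}(\alg{B}_F^*,\utilde{M})$. Because $\utilde{M}$ yields a duality, the evaluation $e_{\alg{B}_F}$ is surjective, so there is $b\in\alg{B}_F$ with $\alpha(r_F^*(\psi))=\psi(b)$ for all $\psi\in\alg{B}_F^*$; taking $\psi=\pi_i$ gives $\alpha(\phi_i)=b_i$. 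Finally surjectivity of $r_F$ yields $a\in\alg{A}$ with $r_F(a)=b$, i.e. $\phi_i(a)=\alpha(\phi_i)$, as required.

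I expect the crux to lie in part (2), in the step that turns the possibly discontinuous morphism $\alpha$ into a genuine element of $\alg{A}$ on each finite set of coordinates: the mechanism is to restrict $\alpha$ along the dual embedding $r_F^*$ to the finite dual $\alg{B}_F^*$, where continuity is free and the duality for the finite algebra $\alg{B}_F$ applies. Minor points requiring care are the verification that composition with $r_F^*$ preserves the partial operations in $H$ (by checking membership in the relevant domains), that the bijection with $\varprojlim_F\alg{B}_F$ is a homeomorphism and not merely an algebra isomorphism (the inverse-limit topology and the topology induced by $\iota$ agree), and, in part (1), the embedding of finite members of $\var{A}$ into finite powers of $\alg{M}$ that underlies the cofinality argument.
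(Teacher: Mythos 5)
Your proof is correct. Note that the paper itself offers no proof of this proposition: it is recalled verbatim from \cite{Davey2011} (Theorems 3.6 and 4.3), so there is nothing internal to compare against. What you have written is essentially a self-contained reconstruction of the arguments in that reference: part (1) via the identification of $\alg{A}^\delta_\iota$ with $\varprojlim_F \alg{B}_F$ and the cofinality of the congruences $\ker r_F$ among the finite-index congruences with quotient in $\var{A}$, and part (2) via factoring a finite subset $F\subseteq \alg{A}^*$ through the finite quotient $\alg{B}_F$ and invoking surjectivity of $e_{\alg{B}_F}$, which is exactly where the duality hypothesis enters. The three points you flag as needing care do all go through: $\alpha\circ r_F^*$ preserves partial operations because $r_F^*$ is a substructure embedding (so domains are respected), every finite-index $\var{A}$-congruence is literally of the form $\ker r_F$ (separate the finitely many pairs of distinct elements of the finite quotient by coordinates of an embedding into a power of $\alg{M}$), and the bijection with the inverse limit matches subbasic opens on both sides, hence is a homeomorphism.
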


Recall that for any $\alg{A}\in \var{A}$, the family $\{A/\theta \mid  \theta \in \Con{A} \text{ and } A/\theta\in \var{A}  \text{ is finite}\}$ with the natural bonding maps $\phi_{\theta, \theta'}\colon x/\theta \mapsto x/\theta'$ for every $\theta \leq \theta'$ forms an inverse system, the inverse limit of which is called the \emph{$\var{A}$-profinite completion of $\alg{A}$} (or simply the \emph{profinite completion of $\alg{A}$}) and is denoted by $\pro_{\var{A}}(\alg{A})$. Any $\alg{A}\in \var{A}$ embeds in $\pro_{\var{A}}(\alg{A})$. If in addition $\var{A}$ is a variety, then $A/\theta \in \var{A}$ for every congruence of $\alg{A}$, and the construction of $\pro_{\var{A}}(\alg{A})$ does not rely on $\var{A}$ and is commonly denoted by $\widehat{\alg{A}}$. The following result, which follows from \cite[Theorem 3.6]{Davey2011},  states that under our assumption of a finitely generated prevariety $\var{A}$, the $\var{A}$-profinite completion of $\alg{A}\in \var{A}$ coincides with its natural extension $\alg{A}^\delta$.
\begin{prop}\label{prop:pro}
If $\alg{A} \in\var{A}$, then there is an isomorphism between $\pro_{\var{A}}(\alg{A})$ and $\alg{A}^\delta$ that fixes $\alg{A}$. 
\end{prop}
Informally speaking, Proposition \ref{prop:pro} shows that natural extension is a tool to compute profinite completions.

We close the section by introducing some notation. We write $F\Subset X$ if $F$ is a finite subset of $X$. If $\tau$ is a topology on $X$ and $x\in X$, then we denote by $\tau_x$ the set of open $\tau$-neighborhoods of $x$. If $b\in X_\tau^Z$ for some $Z$ and if  $F\Subset Z$, then we denote by $[b|F]$ the basic open set $\{y\in X^Z \mid y\restriction_F=b\restriction_F\}$ of $X_\tau^Z$.

If $(X, \leq)$ is an ordered set and $x\in X$, then we denote by $x{\uparrow}$ and $x{\downarrow}$ the up-set and the down-set generated by $x$, respectively.

\section{The topology $\delta$ for profinite completions}\label{sec:topo}

In the distributive lattice-based setting, it is well known that the  topology $\iota$ that naturally equips the canonical extension of a DL $\alg{A}$
can be enriched into a finer topology in which $\alg{A}$ is definable as the algebra of isolated points. Authors have used various notations for this topology: \textsc{Gehrke} and \textsc{Jónsson} denote it by $\sigma$  in \cite{Gehrke2004} and \textsc{Gehrke} and \textsc{Vosmaer} denote it by $\delta$ in \cite{Gehrke2011}. We aim at defining a similar topology in the more general setting of a finitely generated prevariety $\var{A}$ and $\var{A}$-profinite completions.

\subsection{A topology to define $\alg{A}$ in its profinite completion}

If $X, Y \in \cate{X}$ we denote by  $\cate{X}_p(X,Y)$ the set of partial morphisms from $X$ to $Y$, \emph{i.e.}, the set of the maps $f\colon\dom(f)\to Y$ where  $\dom(f)$ is a closed substructure of $X$ and where $f\in \cate{X}(\dom(f), Y)$.

\begin{defn}\label{defn:topo}
If $\alg{A}\in \var{A}$ and  $f \in \cate{X}_p(\alg{A}^*, \utilde{M})$, we set
\[
O_{f}=\{x \in \cate{X}^\flat(\alg{A}^*, \utilde{M})\mid x\supseteq f\}.
\]
Then, we denote by  $\Delta_{\alg{A}}$, or simply $\Delta$, the family
\[
\Delta=\{O_f\mid f \in \cate{X}_p(\alg{A}^*, \utilde{M})\}.
\]
 The topology  $\delta$ is defined as the topology generated by $\Delta$, and we denote by $\alg{A}_\delta$ the topological algebraic structure obtained by equipping $\alg{A}$ with $\delta$.
\end{defn}

\begin{rem}
\begin{enumerate}
\item If $\utilde{M}$ is injective in $\cate{X}$, then $\Delta$ is equal to the family of the sets $O_{K, a}:=O_{e_\alg{A}(a)\restriction_K}$ where $a\in \alg{A}$ and $K$ is a closed substructure of $\alg{A}^*$.
\item  It is not always possible to compare topologies $\delta$ and $\iota$. Nevertheless,  we have $\iota\subseteq\delta$ if any finite subset of $\alg{A}^*$ generates a finite  substructure in $\alg{A}^*$. In particular, we have $\iota \subseteq \delta$  if $\utilde{M}$ is a purely relational structure. 

\end{enumerate}
\end{rem}

Recall that a strong duality is said to be \emph{logarithmic} if (finite) coproducts in the dual category (they always exist since they are dual to products) are given by the direct unions, that is, disjoint unions with constants amalgamated (see section 6.3 in \cite{NDFWA}). 

\begin{lem}\label{lem:base}
If $\utilde{M}$ yields a logarithmic duality for $\var{A}$, then $\Delta$ is a basis of $\delta$.
\end{lem}
\begin{proof}
Let $\alg{A}\in \var{A}$ and  $f, g \in \cate{X}_p(\alg{A}^*,\utilde{M})$. We prove that $O_f \cap O_g\in \Delta$ or $O_f \cap O_g = \varnothing$. First, we note that $\dom(f)\cup \dom(g)$ is a substructure of $\alg{A}^*$. Indeed, if $i_h$ is the inclusion map $i_h:\dom(h)\rightarrow \alg{A}^*$ for $h\in \{f,g\}$, and if $s_h$ is the canonical embedding from $\dom(h)$ into $\dom(f)\amalg \dom(g)$ for $h \in \{f,g\}$, then there is a morphism $i\colon\dom(f)\amalg \dom(g)\rightarrow \alg{A}^*$ such that $i \circ s_f=i_f$ and $i \circ s_g=i_g$.  It follows that $\im(i)=\dom(f)\cup\dom(g)$ is a closed substructure of $\alg{A}^*$.

If $f \cup g$ is not a function, \emph{i.e.}, if $f$ and $g$ do not coincide on $\dom(f)\cap\dom(g)$, or if $f \cup g$ is a function that does not belong to $\cate{X}^\flat(\dom(f)\cup\dom(g), \utilde{M})$, then $O_f\cap O_g=\emptyset$. 

 If $f \cup g \in \cate{X}^\flat(\dom(f)\cup\dom(g), \utilde{M})$, then it also belongs to $\cate{X}(\dom(f) \cup \dom(g), \utilde{M})$ by continuity of $f$  and $g$.  It follows that $O_f\cap O_g=O_{f \cup g}$.
\end{proof}

\begin{lem}\label{lem:bestcase} Assume that $A\in \var{A}$.
\begin{enumerate}
\item The elements of $\cate{X}(\alg{A}^*, \utilde{M})$ are isolated points in $\struc{\cate{X}^\flat(\alg{A}^*, \utilde{M}), \delta}$.
\item If $\utilde{M}$ is injective in $\cate{X}$ and if $\Delta$ is a basis of $\delta$, then $(\alg{A}^*)_*$ is dense in $\struc{\cate{X}^\flat(\alg{A}^*, \utilde{M}), \delta}$.
\item\label{it:sep} If $\utilde{M}$ is injective in $\cate{X}$ and yields a duality for $\var{A}$ and if $\Delta$ is a basis of $\delta$, then $\alg{A}$ is a discrete dense subspace of $\alg{A}^\delta_\delta$.
\end{enumerate}
\end{lem}
\begin{proof}
(1) If $x \in (\alg{A}^*)_*$ then $O_x=\{x\}\in \delta$.

(2) Let $f\in \cate{X}_p(\alg{A}^*, \utilde{M})$.
Since $\utilde{M}$ is injective in $\cate{X}$, there is an $a\in (\alg{A}^*)_*$ such that $a=f$ on $\dom(f)$. It means by definition of $\delta$ that $a\in O_f \cap (\alg{A}^*)_*$.

(3) We know by (1) and (2) that $e_{\alg{A}}(\alg{A})=(\alg{A}^*)_*$ is the subspace of isolated points of $\alg{A}^\delta_\delta$. The conclusion follows from Proposition \ref{prop:init} (\ref{it:init02}).
\end{proof}

By combining Lemmas \ref{lem:base} and \ref{lem:bestcase}, we obtain the following proposition.

\begin{prop}\label{prop:bestcase}
If $\utilde{M}$ is injective in $\cate{X}$ and yields a logarithmic duality for $\var{A}$, then $\alg{A}$ is the subspace of isolated points of $\alg{A}^\delta_\delta$ for every $\alg{A}\in \var{A}$.
\end{prop}

Let $\var{DL}$ be the variety of bounded distributive lattices, that is, $\var{DL}=\classop{ISP}(\alg{2})$, where $\alg{2}=\struc{\{0,1\}, \vee, \wedge}$ is the two-element lattice. Recall that  $\utilde{\alg{2}}:=\struc{\{0,1\}, \leq, \iota}$ where $0\leq 1$ yields a logarithmic natural strong duality for $\var{DL}$,  known as \textsc{Priestley} duality. If $\alg{L}\in \var{DL}$, then $\alg{L}^\delta$ coincides with the canonical extension of $\alg{L}$, which can be constructed by Proposition \ref{prop:init}  as the lattice of decreasing subsets of the  $\alg{L}^*$. In \cite{Gehrke2004}, the authors introduce a topology $\delta'$ on $\alg{L}^\delta$ (denoted by $\sigma$ in  \cite{Gehrke2004} and by $\delta$ in \cite{Gehrke2011}), and use this topology to extend maps between distributive lattices to their canononical extensions (\emph{i.e.}, their profinite completions). Recall that a basis of $\delta'$ is given by the sets $[F,O]$ where $F$ is  a closed element of $\alg{L}^\delta$ (\emph{i.e.}, a closed decreasing subset of $\alg{L}^*$), and  $O$ is an open element of $\alg{L}^\delta$ (\emph{i.e.}, an open decreasing subset of  $\alg{L}^*$). 
In the next proposition, we prove that the topology $\delta$ defined in  Definition \ref{defn:topo} coincides with  $\delta'$.

\begin{prop}\label{prop:boils} 
If $\alg{L}\in \var{DL}$, then $\delta(\alg{L}^\delta)=\delta'(\alg{L}^\delta)$.
\end{prop}
\begin{proof}
First, we prove that $\delta'\subseteq \delta$. Let $F$ and $O$ be  a closed and an open element of $\alg{L}^\delta$, respectively, and assume  $F\subseteq O$. Then, $G:=F\cup -O$ is a closed substructure of $\alg{L}^*$. Let $f:G \rightarrow \utilde{2}$ be the map defined by $f^{-1}(0)=F$. We have $f\in \cate{X}(G,\utilde{2})$ and $[F,O]=O_f$.

Conversely, let $f\in \cate{X}_p(\alg{L}^*, \utilde{M})$. Then, $f^{-1}(0)$ is a decreasing clopen subset of $\dom(f)$. Hence, it is a closed subspace of $\alg{L}^*$ and $F:=f^{-1}(0)\!\!\downarrow$ is a decreasing closed subspace of $\alg{L}^*$. Similarly, $F'=f^{-1}(1)\!\!\uparrow$ is an increasing closed subspace of $\alg{L}^*$.  It follows that
\begin{eqnarray*}
x \in O_f & \Leftrightarrow & f^{-1}(0) \subseteq x   \mbox{ and } f^{-1}(1) \subseteq -x ,\\
& \Leftrightarrow & F \subseteq x  \mbox{ and } x\subseteq -F'.
\end{eqnarray*} 
We conclude that $O_f=[F,-F']\in  \delta'$.
\end{proof}

Note that  there is no statement equivalent to Proposition \ref{prop:boils} for the variety of bounded lattices since it is not an IRF-prevariety.

\subsection{Profinite completions and products}

We aim to use the topology $\delta$ to define extension of maps between algebras of $\var{A}$ to their profinite completions. In this view, an important feature is that under our general assumptions the construction of profinite completions commutes with the construction of products, \emph{i.e.},
\begin{equation}\label{eqn:commute}
(\alg{A}\times \alg{B})^\delta\ {\simeq}\ \alg{A}^\delta \times \alg{B}^\delta,
\end{equation}
for every $\alg{A}, \alg{B}\in \var{A}$.
Given a procedure to extend maps from algebras to their profinite completions, property (\ref{eqn:commute}) would allow us to extend $n$-ary operations ($n\geq 2$) on $\alg{A}\in \var{A}$ to $n$-ary operations on $\alg{A}^\delta$. As proved in the next result, this property holds true under rather mild assumptions.

\begin{thm}\label{thm:prod}
Assume that $\alg{M}$ is {of finite type} and that $\utilde{M}$ yields a {full} duality for $\var{A}$. If $\alg{A}, \alg{B}\in\var{A}$, then
\[
(\alg{A}\times \alg{B})^\delta\  {\simeq}\ \alg{A}^\delta \times \alg{B}^\delta\quad \iff \quad (\alg{A}^* \amalg \alg{B}^*)^\flat \ \simeq\ (\alg{A}^{*})^\flat \amalg (\alg{B}^{*})^\flat.
\]
In particular, if $\utilde{M}$ yields a full logarithmic duality for $\var{A}$, then (\ref{eqn:commute}) holds for every $\alg{A}, \alg{B} \in \var{A}$ and we may assume that the isomorphism is also a $\iota$- and $\delta$-homeomorphism.
\end{thm}
\begin{proof}
Assume that $(\alg{A}^*\amalg\alg{B}^*)^\flat\simeq (\alg{A}^*)^\flat \amalg (\alg{B}^*)^\flat$. It then follows successively that
\begin{eqnarray}
(\alg{A}\times \alg{B})^\delta &  \simeq & \cate{X}^\flat ((\alg{A}\times \alg{B})^{*\flat}, \utilde{M}^\flat)\nonumber\\
& \simeq & \cate{X}^\flat ((\alg{A}^*\amalg \alg{B}^*)^{\flat}, \utilde{M}^\flat)\label{eq:mfk}\\
& \simeq & \cate{X}^\flat ((\alg{A}^*)^\flat \amalg (\alg{B}^*)^{\flat}, \utilde{M}^\flat)\label{eq:sfs}\\
& \simeq & \cate{X}^\flat((\alg{A}^*)^\flat,  \utilde{M}^\flat) \times \cate{X}^\flat((\alg{B}^*)^\flat,  \utilde{M}^\flat)\label{eq:sdgf} \\
& \simeq & \alg{A}^\delta \times \alg{B}^\delta,\nonumber
\end{eqnarray}
where (\ref{eq:mfk})
is obtained because full dualities turn products to coproducts,   (\ref{eq:sdgf}) follows from the fact that $(\var{A}_\iota(\cdot, \alg{M}_\iota), \var{X}^\flat(\cdot, \utilde{M}^\flat),e, \epsilon)$ is a dual adjunction between $\var{A}_\iota$ and $\classop{ISP}(\utilde{M}^\flat)$ and hence turns coproducts to products,  and (\ref{eq:sfs}) holds by assumption. Moreover, if  $\utilde{M}$ yields a logarithmic duality for $\var{A}$, the isomorphism given by the previous piece of argument is easily seen to be a $\iota$- and $\delta$-homeomorphism.

Conversely, if $(\alg{A}\times \alg{B})^\delta \simeq  \alg{A}^\delta \times \alg{B}^\delta$, it follows successively that
\begin{eqnarray}
(\alg{A}^* \amalg \alg{B}^*)^\flat & \simeq & (\alg{A}\times \alg{B})^{*\flat}\nonumber\\
& \simeq & \cate{A}_\iota((\alg{A}\times \alg{B})^\delta, \alg{M}_\iota)\label{eq:dqhddv}\\
& \simeq & \cate{A}_\iota(\alg{A}^\delta\times \alg{B}^\delta, \alg{M}_\iota)\label{eq:dq}\\
& \simeq & \cate{A}_\iota(\alg{A}^\delta, \alg{M}_\iota)\amalg  \cate{A}_\iota(\alg{B}^\delta, \alg{M}_\iota)\label{eq:vdki}\\
& \simeq & (\alg{A}^*)^\flat \amalg (\alg{B}^*)^\flat\label{eq:cbzt}
\end{eqnarray}
where (\ref{eq:dq}) holds by assumption, and where (\ref{eq:dqhddv}),   (\ref{eq:vdki}) and (\ref{eq:cbzt}) follow from  the fact  that  $\var{A}_\iota(\cdot, \alg{M}_\iota)$ and  $\var{X}^\flat(\cdot, \utilde{M}^\flat)$ define a dual equivalence between  $\var{A}_\iota$ and $\classop{ISP}(\utilde{M}^\flat)$ (see \cite[Theorem 2.4]{Davey2012}). 
\end{proof}

In the Appendix, we generalize Theorem \ref{thm:prod} to Boolean products.

\section{Multi-extension of maps}\label{sec:maps}
Given a map $u\colon \alg{A} \to \alg{B}$, we consider the problem of defining an extension of $u$ between $\alg{A}^\delta$ and  $\alg{B}^\delta$. Our approach (Definition \ref{defn:multi}) provides with a mutli-extension $\widetilde{u}$ of $u$, that is, a map valued in the set of (closed) subsets $\Gamma(\alg{B}^\delta_\iota)$ of $\alg{B}^\delta_\iota$. This multi-extension $\widetilde{u}$ enjoys some continuity properties (Theorem \ref{thm:cont}).

We adopt the following assumption for the remainder of the paper.
\begin{ass}\label{ass:working}
The structure $\utilde{M}$ yields a logarithmic duality for $\var{A}$ and $\utilde{M}$ is injective in $\cate{X}$.
\end{ass}
 
Surprisingly enough, as noted in \cite{NDFWA}, many known strong dualities are logarithmic and hence, satisfy Assumption \ref{ass:working}.


\begin{defn}\label{defn:multi}
Let $\alg{A}, \alg{B} \in \var{A}$ and $u\colon A \to B$. The \emph{relational extension} of $u$ is the relation $\overline{u}$ defined as the topological closure of $u$ in $\alg{A}^\delta_\delta \times \alg{B}^\delta_\iota$. The \emph{multi-extension} of $u$ is the map $\widetilde{u}$ defined on $\alg{A}^\delta$ by setting $\widetilde{u}(x)=\{y \in \alg{B}^\delta \mid (x,y)\in \overline {u}\}$ for every $x\in \alg{A}^\delta$.
\end{defn}

Let us recall that if $\struc{X, \tau}$ is a topological space, then the set $\Gamma(X)$ of closed subsets of $X$ is a complete lattice. Moreover, if $Y$ is dense in $X$,  if $C$ is a complete lattice, and if $f\colon Y  \to C$, then $\limsup_\tau f$ is the map defined on $X$   by $\limsup_\tau f(x)=\bigwedge\{\bigvee f(Y\cap U)\mid U \in\tau_x\}$. 
The next lemma shows that $\widetilde{u}$ can be computed analogously as the upper extension in the setting of bounded distributive lattices (see \cite[Definition 2.13]{Gehrke2004}).

\begin{lem}\label{lem:limsup}
Let $\alg{A}, \alg{B} \in \var{A}$ and $u\colon A \to B$. If $\widehat{u}\colon \alg{A}_\delta \to \Gamma(\alg{B}^\delta_\iota)$ is the map defined as $\widehat{u}(a)=\{u(a)\}$, then $\widetilde{u}=\limsup_\delta \widehat{u}$. 
\end{lem}
\begin{proof}
For every $x\in \alg{A}^\delta$, we have
\[
{\limsup}_\delta \widehat{u}(x)=\bigcap \{u(U\cap A)^{-} \mid  U\in\delta_x\},
\]
where the closure is computed in $\alg{B}^\delta_\iota$. It follows directly that $y\in {\limsup}_\delta \widehat{u}(x)$ if and only if $(x,y)\in \overline{u}$.
\end{proof}

Combining Proposition \ref{prop:bestcase} with Lemma \ref{lem:limsup}, and by compactness of $\alg{B}^\delta_\iota$ we obtain directly the following result.

\begin{prop}\label{prop:ext}
Let $\alg{A}, \alg{B} \in \var{A}$ and $u\colon A \to B$. 
\begin{enumerate}
\item If $a \in \alg{A}$, then $\widetilde{u}(a)=\{u(a)\}$.
\item If $x\in \alg{A}^\delta$ then $\widetilde{u}(x)$ is nonempty.
\end{enumerate}
\end{prop}

The following theorem shows that $\widetilde{u}$ enjoys similar continuity properties as the  upper extension in the setting of bounded distributive lattices (see \cite[Theorem 2.12]{Gehrke2004}).
Recall that if $\struc{X, \tau}$ is a compact \textsc{Hausdorff} space, then the family of  sets 
\[
\square U=\{F\in \Gamma(X)\mid F \subseteq U\}, \quad U  \in \tau,
\]
is a basis of a topology $\sigma{\downarrow}$, which is called the  \emph{co-\textsc{Scott} topology}.

\begin{thm}\label{thm:cont}
Let $\alg{A}$, $\alg{B}\in \var{A}$ and $u\colon A \to B$.
\begin{enumerate}
\item\label{it:lkj01} The map $\widetilde{u}\colon \alg{A}^\delta \to \Gamma(\alg{B}^\delta_\iota)$ is $(\delta, \sigma{\downarrow})$-continuous.
\item\label{it:lkj02} If $u'\colon \alg{A}^\delta \to \Gamma(\alg{B}^\delta_\iota)$ is a $(\delta, \sigma{\downarrow})$-continuous map such that $u'(a)=\{u(a)\}$ for every $a \in A$, then $\widetilde{u}(x) \subseteq u'(x)$ for every $x \in X$.
\end{enumerate}
\end{thm}
\begin{proof} First, we prove the following claim.
\smallskip

\textbf{Claim.} \emph{For any $x\in \alg{A}^\delta$ and any $F\Subset \alg{B}^*$, it holds $\widetilde{u}(x)\restriction_F~=~\bigcap\{u(V \cap A)\restriction_F \mid V \in \delta_x\}$.}

\begin{proof}[Proof of the Claim]\let\qed\relax
The inclusion $\subseteq$ is clear. 
Let us prove inclusion $\supseteq$. Let  $\alpha \in M^F$ be such that $\alpha \in u(V\cap A)\restriction_F$ for every $\delta$-neighborhood $V$ of $x$. For any finite subset $G$ of $\alg{B}^*$ that contains $F$ and any $\delta$-neighborhood $V$ of $x$, set
\[
K_{G,V}:=\{y \in \alg{B}^\delta \mid y\restriction_G\in u(V \cap A)\restriction_G\} \cap [\alpha|F].
\]
We obtain by compactness that $H_{G}:=\bigcap\{K_{G,V} \mid V \in \delta_x\}$ is a nonempty closed subspace of $\alg{B}^\delta_\iota$. It follows again by compactness that the family $\{H_G \mid F \Subset G \Subset \alg{B}^*\}$ has an nonempty intersection $H$. Any element $y$ of $H$ belongs to $\widetilde{u}(x)$ and satisfies $y\restriction_{F}=\alpha$, which proves that $\alpha \in \widetilde{u}(x)\restriction_F$.
\end{proof}

(\ref{it:lkj01}) 
%
We prove that $\widetilde{u}^{-1}(\square U)$ is an open subspace of $\alg{A}^\delta_\delta$ for any open subspace $U$ of $\alg{B}^\delta_\iota$. By compactness of $\alg{B}^\delta_\iota$, it suffices to consider the case where $U$ is a finite union of basic open sets $[\alpha|F]$ where $F\Subset \alg{B}^*$ and $\alpha \in \alg{M}^F$. We consider the case where $U$ if the union of two such basic open sets $[F_1|\alpha_1]$ and $[F_2|\alpha_2]$, as the general case can be proved in a similar way. Let $x\in \widetilde{u}^{-1}\big(\square([F_1|\alpha_1] \cup [F_2\vert \alpha_2])\big)$ and $F$ be $F_1\cup F_2$. The family $K_F:=\{u(V \cap A)\restriction_F \mid  V \in \delta_x\}$ is a downward directed family of nonempty finite sets, so it has a nonempty intersection. Let $W$ be  any $\delta$-neighborhood of $x$ such that $u(W\cap A)\restriction_F=\bigcap K_F$. We prove that $W \subseteq \widetilde{u}^{-1}\big(\square([F_1|\alpha_1] \cup [F_2\cap \alpha_2])\big)$. Let $z\in W$. 
We obtain successively
\begin{align}
\widetilde{u}(z)\restriction_F&=\bigcap\{u(V \cap A)\restriction_F \mid V\in \delta_z\}\label{eqn:fgr01}\\
& \subseteq u(W\cap A)\restriction_F\label{eqn:fgr02}\\
& = \bigcap\{u(V \cap A)\restriction_F \mid V\in \delta_x\}\label{eqn:fgr03}\\
&= \widetilde{u}(x)\restriction_F,\label{eqn:fgr04}
\end{align}
where \eqref{eqn:fgr01} and \eqref{eqn:fgr04} are obtained by the Claim, where \eqref{eqn:fgr02} holds because $W$ is a $\delta$-neighborhood of $z$, and \eqref{eqn:fgr03} holds by definition of $W$. We deduce from \eqref{eqn:fgr04} that $\widetilde{u}(z)\subseteq [F_1|\alpha_1] \cup [F_2|\alpha_2]$.

(2) By definition of the map $\widetilde{u}$, it suffices to prove that $R:=\{(x,y)\in \alg{A}^\delta_\delta\times \alg{B}^\delta_\iota \mid y \in u'(x)\}$ is a closed relation that contains $u$. We have $u\subseteq R$ by assumption. Now, let $(x,y)\in \alg{A}^\delta_\delta\times \alg{B}^\delta_\iota$ such that $y\not\in u'(x)$. Since $u'(x)$ is a closed subspace of the Boolean space $\alg{B}^\delta_\iota$, there is a clopen subspace $U$ of $\alg{B}^\delta_\iota$ such that $z\not\in U$ and $u'(x)\subseteq U$. It follows by continuity of $u'$ that $u'^{-1}(\square U)\times (\alg{B}^\delta_\iota \setminus U)$ is a neighborhood of $(x,y)$ disjoint from $R$.
\end{proof}
\begin{rem}\label{rem:de}
It follows from the Claim stated in the proof of Theorem \ref{thm:cont} that if $u\colon \alg{A} \to \alg{B}$ then $\widetilde{u}(x)\restriction_{\{\phi\}}=\widetilde{\phi \circ u}(x)$ for any $x\in \alg{A}^\delta$ and any $\phi \in  \alg{B}^*$.
\end{rem}

\section{Coninuity properties and function compositions}\label{sec:compo}

Theorem \ref{thm:cont} characterizes $\widetilde{u}$ as the smallest $(\delta, \sigma{\downarrow})$-continuous extension $u'\colon \alg{A}^\delta \to \Gamma(\alg{B}^\delta_\iota)$ of $u$. In this section, we investigate the properties of $\widetilde{u}$ under additional continuity assumptions.

\subsection{Smoothness and strongness}

The case where the relational extension $\overline{u}$ of  $u\colon \alg{A} \to \alg{B}$ is a function  leads us  to the following natural definition.
\begin{defn}\label{defn:smo}
Let $\alg{A}, \alg{B} \in \var{A}$ and $u\colon\alg{A} \rightarrow \alg{B}$. We say that $u$ is is \emph{smooth} if $\overline{u}$ is a function, that is, if $\widetilde{u}(x)$ is a one-element set for every $x\in \alg{A}^\delta$. In this case, we denote by $u^\delta$ the map $u^\delta:\alg{A}^\delta \rightarrow \alg{B}^\delta$ defined by $u^\delta(x)\in \widetilde{u}(x)$.
\end{defn}
\begin{ex}\label{ex:smooth00}
If $\iota \subseteq \delta$ then any term function is smooth since it is $(\iota, \iota)$-continuous.
\end{ex}
 Theorem \ref{thm:cont} can be rephrased for smooth maps in the following way.
\begin{prop}\label{prop:smooth}
Let $\alg{A}, \alg{B} \in \var{A}$ and $u:\alg{A} \rightarrow \alg{B}$. 
\begin{enumerate}
\item\label{it:uyt01} If $u$ is smooth then $u^\delta:\alg{A}^\delta\rightarrow \alg{B}^\delta$ is a $(\delta,\iota)$-continuous extension of $u$.
\item\label{it:uyt02} If $u$ admits a $(\delta,\iota)$-continuous extension $u':\alg{A}^\delta \rightarrow \alg{B}^\delta$ then $u$ is smooth and $u^\delta=u'$.
\end{enumerate}
\end{prop}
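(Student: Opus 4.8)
The plan is to reduce both items to the continuity results of Theorem \ref{thm:good_ext01} and Theorem \ref{thm:good_ext02} by means of a single auxiliary observation: the singleton map embeds $\alg{B}^\delta_\iota$ as a topological subspace of $\Gamma(\alg{B}^\delta_\iota)$ equipped with the co-\textsc{Scott} topology. Concretely, I would first check that $s:\alg{B}^\delta_\iota \to \Gamma(\alg{B}^\delta_\iota)_{\sigma\downarrow}$, $y\mapsto \{y\}$, is a topological embedding. Continuity is immediate from the computation $s^{-1}(\square U)=U$ for every $\iota$-open $U$, and $s$ is an embedding because $s(U)=\square U \cap s(\alg{B}^\delta)$, so that the subspace topology induced on the singletons by $\sigma\downarrow$ is exactly $\iota$.

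For item (1), when $u$ is smooth the map $u^\delta$ is by definition obtained from $\widetilde{u}$ by extracting the unique element of each singleton $\widetilde{u}(x)$, that is, $u^\delta=s^{-1}\circ \widetilde{u}$ on the image of $s$. That $u^\delta$ extends $u$ follows at once from the identity $\widetilde{u}(e_{\alg{A}}(a))=\{e_{\alg{B}}(u(a))\}$ of Theorem \ref{thm:good_ext01}. For continuity I would argue directly: for a basic $\iota$-open $U$,
\[
(u^\delta)^{-1}(U)=\{x\mid u^\delta(x)\in U\}=\{x\mid \widetilde{u}(x)\subseteq U\}=\widetilde{u}^{-1}(\square U),
\]
which is $\delta$-open since $\widetilde{u}$ is $(\delta,\sigma\downarrow)$-continuous.

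For item (2), suppose $u'$ is a $(\delta,\iota)$-continuous extension of $u$. Then $s\circ u'\colon x\mapsto\{u'(x)\}$ is $(\delta,\sigma\downarrow)$-continuous, and since $u'$ agrees with $u$ on $e_{\alg{A}}(\alg{A})$ it is a $\Gamma$-extension of $u$. Theorem \ref{thm:good_ext02} then forces $\widetilde{u}(x)\subseteq\{u'(x)\}$ for every $x\in\alg{A}^\delta$; combined with the non-emptiness of $\widetilde{u}(x)$ recorded in item (\ref{it:4}) of Lemma \ref{lem:tech}, this yields $\widetilde{u}(x)=\{u'(x)\}$. Hence every $\widetilde{u}(x)$ is a singleton, so $u$ is smooth, and $u^\delta=u'$ by the definition of $u^\delta$.

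I do not anticipate a genuine obstacle once the embedding $s$ is established, since both halves then become one-line deductions from the two earlier theorems. The only point demanding care is to confirm that $s\circ u'$ really is a $\Gamma$-extension, that is, that the intended meaning of \emph{extension} is $u'(e_{\alg{A}}(a))=e_{\alg{B}}(u(a))$, so that Theorem \ref{thm:good_ext02} is applicable; everything else is formal.
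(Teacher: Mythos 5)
Your proof is correct, and it follows exactly the route the paper intends: the proposition is stated there without proof, introduced only by the remark that the continuity properties of $\widetilde{u}$ from Theorem \ref{thm:good_ext01} and Theorem \ref{thm:good_ext02} ``can be translated'' into continuity properties of $u^\delta$, and your singleton embedding $s$ together with the identities $(u^\delta)^{-1}(U)=\widetilde{u}^{-1}(\square U)$ and $\widetilde{u}(x)\subseteq\{u'(x)\}$ (plus non-emptiness from Lemma \ref{lem:tech}) is precisely that translation made explicit. Your closing caution about the meaning of \emph{extension}, namely $u'\circ e_{\alg{A}}=e_{\alg{B}}\circ u$, matches the convention used throughout the paper, so Theorem \ref{thm:good_ext02} applies as you claim.
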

Propositions \ref{prop:boils}  and \ref{prop:smooth} show that the notion of {smoothness} as defined in Definition \ref{defn:smo} boils down to the one defined in \cite{Geh00} when it is considered for the variety of bounded distributive lattices.
As a corollary of Proposition \ref{prop:smooth}(2), we obtain that if $u\colon \alg{A} \to \alg{B}$ is not smooth, then it is not even possible to define a continuous extension $u^\delta\colon \alg{A}^\delta_\delta \to \alg{B}^\delta_\iota$ by suitably picking up an element $u^\delta(x)$ in $\widetilde{u}(x)$ for every $x\in \alg{A}^\delta$.
\begin{ex}\label{ex:smooth01} \emph{If $\iota \subseteq \delta$, then every element $\phi \in \alg{A}^*$ is smooth.}  Consider the map $\phi'\colon \alg{A}^\delta \to M$ defined by $\phi'(x)=x(\phi)$. For any $F\subseteq M$, we have ${\phi'}^{-1}(F)=\bigcup_{f\in F}[\phi:f] \cap \alg{A}^\delta$, which proves that $\phi'$ is $(\iota, \iota)$-continuous, so it is $(\delta, \iota)$-continuous since $\delta\subseteq  \iota$. The conclusion follows from Proposition \ref{prop:smooth} (\ref{it:uyt02}).
\end{ex}

\begin{prop} \label{prop:local}
Let $\alg{A}, \alg{B} \in \var{A}$ and $u:\alg{A} \rightarrow \alg{B}$. The map $u$ is smooth if and only if $\phi \circ u$ is smooth for every $\phi \in \alg{B}^\delta$.
\end{prop}
\begin{proof}
If $u$ is smooth and $\phi \in \alg{B}^*$, then the map $\phi'\circ u$ where $\phi'(x):=x(\phi)$ for any $x\in \alg{B}^\delta$ is a $(\delta, \iota)$-continuous extension of $\phi \circ u$. It follows that $\phi \circ u$ is smooth by Proposition \ref{prop:smooth}(\ref{it:uyt02}).
Conversely, assume that $\phi\circ u$ is smooth for every $\phi \in \alg{B}^*$. We prove that the map $u'\colon \alg{A}^\delta_\delta \to \alg{B}^\delta_\iota$ defined as $u'(x)=(\phi \circ u)^\delta \circ x$ is continuous, and the conclusion follows from Proposition  \ref{prop:smooth}(\ref{it:uyt02}). Let $F$ be a finite subset of $\alg{B}^*$ and $\alpha \in M^F$. We have 
\[u'^{-1}([F|\alpha])=\bigcap\{\big((\phi \circ u)^{\delta}\big)^{ -1}(\{\alpha(\phi)\}) \mid \phi \in F\},
\]
which proves that $u'$ is $(\delta, \iota)$-continuous by  Proposition  \ref{prop:smooth}(\ref{it:uyt02}) and our assumption.
\end{proof}
\begin{ex} \emph{If $\iota \subseteq \delta$, then every $u\in \var{A}(\alg{A}, \alg{B})$ is smooth.} This result follows from  Example \ref{ex:smooth01} and Proposition \ref{prop:local}. If $\alg{M}$ is of finite type, it can also be considered as a consequence of \cite[Theorem 2.4]{Davey2012}.
\end{ex}

\begin{defn}\label{defn:strong}
Let $\alg{A}, \alg{B} \in \var{A}$ and $u\colon\alg{A} \rightarrow \alg{B}$. We say that $u$ is is \emph{strong} if   $\widetilde{u}$ is $(\iota, \sigma{\downarrow})$-continuous.
\end{defn}

The proof of the following Lemma is straightforward.

\begin{lem}\label{lem:strongg}
Assume that $\iota \subseteq \delta$, and let $u\colon \alg{A} \to \alg{B}$ be a smooth map. Then $u$ is strong if and only if $u^\delta$ is $(\iota, \iota)$-continuous. 
\end{lem}

\begin{ex}\label{ex:stronggg}\emph{If $\iota \subseteq \delta$, then every $u\in \var{A}(\alg{A}, \alg{B})$ is strong.} We already know that $u$ is smooth, and we prove as in Example \ref{ex:smooth01} that $u^\delta(x)(\phi)=x(\phi \circ u)$ for every $x\in \alg{A}^\delta$ and $\phi \in  \alg{B}^*$. It follows that $u^\delta$ is $(\iota, \iota)$-continuous, or equivalently, that $\widetilde{u}$ is $(\iota, \sigma{\downarrow})$-continuous by Lemma \ref{lem:strongg}.
\end{ex}

Strongness can be used to obtain the preservation of functional composition through profinite completions, as illustrated in the next proposition.

\begin{prop}\label{prop:strong}
Let $\alg{A}, \alg{B}, \alg{C} \in \var{A}$, $u:\alg{A}\rightarrow \alg{B}$ and $v:\alg{B}\rightarrow \alg{C}$. 
\begin{enumerate}
\item\label{it:mfr01} If $v$ is strong then $\overline{vu}\subseteq \overline{v}\circ\overline{u}$.
\item\label{it:mfr02} If $u$ is smooth and if $v$ is strong and smooth, then $vu$ is smooth and $(vu)^\delta=v^\delta u^\delta$.
\end{enumerate}
\end{prop}
\begin{proof}
First, we prove the following claim.
\smallskip

\noindent {\bf Claim.} \emph{For any strong map $u\colon\alg{A}\to\alg{B}$, the map $\widecheck{u}\colon \Gamma(\alg{A}^\delta_\iota) \to \Gamma(\alg{B}^\delta_\iota)$ defined by $\widecheck{u}(K)=\bigcup\widetilde{u}(K)$ is $(\sigma{\downarrow}, \sigma{\downarrow})$-continuous.}

\begin{proof}[Proof of the Claim.]\let\qed\relax
First, we prove that $\widecheck{u}(K)$ is a closed subspace of $\alg{B}^\delta_\iota$ for every closed subspace $K$ of $\alg{A}^\delta_\iota$. Let $y\in \alg{B}^\delta_\iota$ such that $y\not\in\widecheck{u}(K)$. For every $x\in K$ let $V_x$ and $W_x$ be disjoint $\iota$-neighborhood of $\widetilde{u}(x)$ and $y$, respectively. By continuity of $\widetilde{u}$ and compactness of $K$, there is a finite subset $F$ of $K$ such that $\{\widetilde{u}^{-1}(\square V_x) \mid x \in F\}$ covers $K$. It follows that $W:=\bigcap\{W_x \mid x \in F\}$ is a $\iota$-neighborhood of $y$ that does not meet $\widecheck{u}(K)$. 

Now, for any open subspace $U$ of $\alg{B}^\delta_\iota$, it is not difficult to prove that \[\widecheck{u}^{-1}(\square U)=\square\widetilde{u}^{-1}(\square U).\]
The continuity of $\widecheck{u}$ follows from the latter identity and strongness of $u$.
\end{proof}

(\ref{it:mfr01}) By the Claim, the map $\widecheck{v}\widetilde{u}$ is a $(\iota,\sigma{\downarrow})$-continuous extension of $vu$. Then, we obtain (\ref{it:mfr01}) by  Theorem \ref{thm:cont}.

(\ref{it:mfr02}) By the Claim, the function $w\colon \alg{A}^\delta_\delta \to \alg{C}^\delta_\iota$  that maps every $x\in \alg{A}^\delta$ to the only element of $\widecheck{v}\widetilde{u}(x)$ is continuous. Then, we obtain (\ref{it:mfr02}) by Proposition \ref{prop:smooth}.
\end{proof}

\begin{cor}
Assume that $\iota \subseteq \delta$ and let $\alg{A}, \alg{B}, \alg{C} \in \var{A}$.
\begin{enumerate}
\item\label{it:sde01} Any term function $u:=t^{\alg{A}}(s_1^{\alg{A}}, \ldots, s_\ell^{\alg{A}})$ is smooth and strong, and  $u^\delta=(t^{\alg{A}})^\delta((s_1^{\alg{A}})^\delta, \ldots, (s_\ell^{\alg{A}})^\delta)$.

\item\label{it:sde02} If $u \in \var{A}(\alg{A}, \alg{B})$ and $v \in \var{A}(\alg{B}, \alg{C})$, then $vu$ is smooth and strong and $(vu)^\delta=u^\delta v^\delta$.
\end{enumerate}
\end{cor}
\begin{proof}
(\ref{it:sde01}) The proof is obtained by induction on the construction of the term using Example \ref{ex:smooth00}, Lemma \ref{lem:strongg} and Proposition \ref{prop:strong} (\ref{it:mfr02}) since any term function is $(\iota, \iota)$-continuous.

(\ref{it:sde02}) The proof is an application of Lemma \ref{lem:strongg}, Example \ref{ex:stronggg}, and Proposition \ref{prop:strong}~(\ref{it:mfr02}).
\end{proof}

\subsection{A sample case: profinite completions of median algebras}
In this subsection, we illustrate the previous constructions by considering  that $\var{A}$ is the variety of median algebras, that is, $\var{A}=\classop{ISP}(\alg{2})$ where $\alg{2}=\struc{\{0,1\}, \me }$ is the algebra with a single ternary operation $\me$ defined as the majority function on $\{0,1\}$. This variety is of special interest as  (i) it is  not lattice-based, (ii) it admits a strongly logarithmic duality, and (iii) the dual category is locally finite. Hence,  $\iota(\alg{A}^\delta)\subseteq\delta(\alg{A}^\delta)$ for every $\alg{A}\in \var{A}$.

\subsubsection{A natural duality for median algebras}
It is known \cite{NDFWA,Isbell1980, Werner1981} that the topological structure 
\[\utilde{2}=\struc{\{0,1\}, 0,1, \leq, {}^\bullet, \iota}\]
 with two constants $0$ and $1$, the natural order $\leq$, and the unary operation ${}^\bullet$ defined by $x^\bullet\equiv (x+1)\mod 2$, yields a strong logarithmic duality for $\var{A}$. A topological structure $\str{X}=\struc{X, 0, 1, \leq, {}^\bullet, \tau}$ is a object of the dual category $\cate{X}=\classop{IS}_c\classop{P}(\utilde{2})$ provided that $\struc{X, \leq, \iota}$ is a \textsc{Priestley} space with bounds $0$ and $1$, that ${}^\bullet$ is an order reversing homeomorphism that swaps $0$ and $1$ and that satisfies $\phi^{\bullet\bullet}=\phi$, and $\phi \not\leq \phi^{\bullet}$ for every $\phi \neq 0$.

There is an equivalent spectrum-based formulation of this duality that eases computations. A subset $\phi$ of a median algebra $\alg{A}$ is  \emph{prime convex} if for every $x, y, z \in \alg{A}$, the element $\me(x,y,z)$ belongs to $\phi$ if and only if at least one of the sets $\{x, y\}$, $\{x,z\}$, $\{z,y\}$ is a subset of $\phi$. A subset $x$ of a structure $\str{X}\in \cate{X}$ is a \emph{disjoint ideal} of $\str{X}$ if it is a downset set disjoint with $x^\bullet$. If in addition $x$ is a clopen subset of $\str{X}$, then $x$ is called a \emph{continuous disjoint ideal}. A \emph{(continuous) maximal disjoint ideal} of $X$ is a (continuous) ideal that contains $\phi$ or $\phi^\bullet$ for every $\phi\in X.$

It is not difficult to show that the map $\phi \mapsto \phi^{-1}(0)$ is an isomorphism between $\alg{A}^*$ and  the prime spectrum of $\alg{A}$ (\emph{i.e.}, the set of prime convex subsets of $\alg{A}$) equipped with inclusion order, $\emptyset$ and $\alg{A}$ as bottom and top element respectively, set complementation as map ${}^\bullet$, and \textsc{Zariski} topology. If $\str{X}\in\cate{X}$, then the dual $X_*$ of $\str{X}$ is isomorphic to  the set of continuous maximal disjoint ideals of $\str{X}$ equipped with the operation $\me$ inherited from the median operation defined on the powerset of $X$ as
\begin{equation}\label{eqn:ope}
 \me(x, y ,z)=(x\cap y)\cup (x\cap z)\cup(y\cap z).
\end{equation}
If $\alg{A}\in \var{A}$, then $\alg{A}^\delta$ is isomorphic to the set of the maximal disjoint ideals of $\alg{A}^*$ equipped with the operation  defined in (\ref{eqn:ope}).

\subsubsection{Profinite completions of Boolean powers of $\alg{2}$}
We can apply Theorem \ref{thm:bool_prod} to compute profinite completions of Boolean powers of the median algebra $\alg{2}$.
\begin{prop}\label{prop:bool}
 If $\alg{A}$ is a median algebra that has a Boolean representation  $\alg{A}\hookrightarrow \alg{2}^X$, then  $\alg{A}^\delta_\iota$ is isomorphic (algebraically and topologically) to $\alg{2}_\iota^X$.
\end{prop}
\begin{proof}
The dual of $\alg{2}$ is depicted in Figure \ref{fig:dual2}. Observe that for every nonempty finite sets $I$ and $J$, every $a\in \alg{2}^I$ and $b\in \alg{2}^J$, identity \[\alg{2}^*=\bigcup_{i\in I}[a_i:1] \cup \bigcup_{j\in J}[b_j:0]\] holds if and only if $\bigcap_{j\in J}[b_j:1] \subseteq \bigcup_{i\in I}[a_i:1]$, that is, if and only if the following condition is satisfied in $\alg{2}$ (for some $j_0\in J$),
\[
\bigwedge_{j\in J} (b_j=b_{j_0}) \Rightarrow  \bigvee_{i\in I} (a_i=b_{j_0}).
\]  
The latter formula is also equivalent to
\[
\bigvee_{k,l\in J; \, i\in I} \big(\me(a_i,b_k,b_l)=a_i\big).
\]
We conclude the proof by applying  Theorem \ref{thm:bool_prod}.
\begin{figure}
\begin{tikzpicture}[line cap=round,line join=round,x=1.0cm,y=1.0cm, scale=0.6]
\draw (2,4)-- (0,2);
\draw (2,0)-- (0,2);
\draw (2,0)-- (4,2);
\draw (2,4)-- (4,2);
\draw [rotate around={-45:(1,1)}] (1,1) ellipse (1.51cm and 0.53cm);
\draw [rotate around={45:(3,1)}] (3,1) ellipse (1.53cm and 0.57cm);
\draw [rotate around={-45:(3,3)},dotted] (3,3) ellipse (1.52cm and 0.55cm);
\draw [rotate around={45:(1,3)},dotted] (1,3) ellipse (1.53cm and 0.58cm);
\begin{scriptsize}
\fill  (2,4) circle (1.5pt);
\draw (2.16,4.46) node {$\alg{2}$};
\fill  (0,2) circle (1.5pt);
\draw (-0.20,2.26) node[left] {$\{0\}$};
\fill  (2,0) circle (1.5pt);
\draw (2.16,-0.2) node[below] {$\emptyset$};
\fill  (4,2) circle (1.5pt);
\draw (4.16,2.26) node[right] {$\{1\}$};
\draw (0.5,0.5) node[left] {$[1:1]$};
\draw (3.5,0.5) node[right] {$[0:1]$};
\draw (0.5,3.5) node[left] {$[0:0]$};
\draw (3.5,3.5) node[right] {$[1:0]$};
\end{scriptsize}
\end{tikzpicture}
\caption{Dual of median algebra $\alg{2}$}
\label{fig:dual2}
\end{figure}
\end{proof}
Corollary \ref{cor:surpr}  is a surprising consequenc of Proposition \ref{prop:bool}.  We say that a median algebra $\alg{A}=\struc{A,  \me}$ is a \emph{Boolean} if there is a Boolean algebra $\struc{A, \vee, \wedge, \neg, 0,1}$ such that $\me(x,y,z)=(x\wedge y) \vee (x\wedge z) \vee (y\wedge z)$ for every $x, y, z$ in $A$.
Recall that an algebra $\alg{A}=\struc{A, \me, \cdot^c}$ of type $(3,1)$ is a  \emph{ternary Boolean algebra} \cite{Grau1947} if $\struc{A, \me}$ is a median algebra and the equation $\me(x,z,x^c)=z$ holds in $\alg{A}$. 

\begin{lem}[{{\cite{Grau1947}}}]\label{lem:grau}
A median algebra is Boolean if and only if it is the $\{\me\}$-reduct of a ternary Boolean algebra.
\end{lem}

\begin{cor}\label{cor:surpr}
Let $\alg{A}$ be a median algebra. The following conditions are equivalent.
\begin{enumerate}[(i)]
\item\label{it:jhg01} $\alg{A}^\delta$ is Boolean.
\item\label{it:jhg02} $\alg{A}$ is a Boolean power of $\alg{2}$.
\end{enumerate}
\end{cor}
\begin{proof}
(\ref{it:jhg01}) $\implies$ (\ref{it:jhg02}) Let $\alg{A}$ be a Boolean median algebra, and let $\alg{A}^\flat$ be a Boolean algebra whose $\{\me\}$-reduct is $\alg{A}$. Then $\alg{A}^\flat$ can be represented as a Boolean power $\alg{A}^\flat \hookrightarrow \alg{2}^{X}$, where $X$ is the Stone dual of $\alg{A}^\flat$.  This Boolean representation still holds between the $\{\me\}$-reducts of $\alg{A}^\flat$ and $\alg{2}^X$.

(\ref{it:jhg02}) $\implies$ (\ref{it:jhg01}) We know by Proposition \ref{prop:bool} that we can identify $\alg{A}^\delta$ with $\alg{2}^X$. Denote by $\cdot^c$ the operation defined on $\alg{2}^X$ by
\[
x^c(\phi)\equiv 1+x(\phi) \mod 2, \qquad \phi \in {X}.
\]
Then $\struc{2^X, \me, \cdot^c}$ is a ternary Boolean algebra, and we conclude the proof by Lemma \ref{lem:grau}
\end{proof}
We conclude the section by giving an example of a smooth function which is not a homomorphism.

\begin{ex}
In the $\wedge$-semillatice $\struc{A, \leq}$ depicted in Fig. \ref{fig:graph},  any three elements have an upper-bound whenever each pair of them is bounded above, and any principal ideal is a distributive lattice. Hence, it is a median semilattice \cite{Sholander1954}. It follows that the operation $\me$ defined on $A$ as $\me(x,y,z)=(x\wedge y)\vee(x\wedge z)\vee (y\wedge z)$ turns $A$ into a median algebra $\alg{A}$. This operation can be easily computed explicitly: for every $j,k,\ell \in \omega$
\[
\begin{array}{ll}
\me(a_j, a_k, a_\ell)=a_{(j,k,\ell)} \quad & \me(a_j, a_k, b_\ell)=a_{(j,k,\ell)}\\
 \me(a_j, b_k, b_\ell)=a_{(j,k,\ell)} \quad & \me(b_j, b_k, b_\ell)=a_{(j,k,\ell)},
\end{array}
\]
where $(j,k,\ell)$ denotes the median element of $j,k,\ell\in \omega$.
\begin{figure}
\begin{center}
\begin{tikzpicture}

\coordinate (A) at (0,0);
\fill (A) circle(0.057) node[left]{$a_0$};
\fill (A)++(0,0.5) circle(0.057);
\fill (A)++(0,1) circle(0.057);
\fill (A)++(0,1.5) circle(0.057);
\coordinate (B) at (0.5,0.5);
\fill (B) circle(0.057);
\fill (B)++(0,0.5) circle(0.057);
\fill (B)++(0,1) circle(0.057);
\draw(A) -- ++(0,0.5) node[left]{$a_1$};
\draw (A)++(0,0.5)-- ++(0,0.5)node[left]{$a_2$} ;
\draw (A) ++(0,1)-- ++(0,0.5) node[left]{$a_3$};

\draw (A) -- (B) node[right]{$b_0$};
\draw (A)++(0,0.5) -- ++(0.5,0.5) node[right]{$b_1$};
\draw (A)++(0,1) -- ++(0.5,0.5) node[right]{$b_2$};
\draw[dashed] (A)++(0,1.5) -- ++(0,0.5);
\draw[dashed] (A)++(0.5,1.5) -- ++(0,0.5);
\end{tikzpicture}
\end{center}
\caption{Graph of median algebra $\alg{A}$}
\label{fig:graph}
\end{figure}

Clearly, the elements of $\alg{A}^*$ are 
\[
A_i=\uperset{a_i},\quad A_i^\bullet=A\setminus\uperset{a_i}, \quad B_i=\{b_i\}, \quad B_i^\bullet=A\setminus \{b_i\}, \qquad i \in \omega.
\]
Hence, the dual of $\alg{A}$ is depicted in Fig. \ref{fig:dual}.

The elements of the bidual of $\alg{A}$ are easily computed:
\[
e_{\alg{A}}(b_n)=\lowerset{A_{n+1}} \cup \lowerset{A_n^\bullet}=\lowerset{B_n^\bullet}, \qquad n \in \omega,
\]
\[
e_{\alg{A}}(a_n)=\lowerset{A_{n+1}} \cup \lowerset{A_n^\bullet} \cup \{B_n\}, \qquad n \in \omega. 
\]
Then $\alg{A}^\delta \setminus e_{\alg{A}}(\alg{A})=\{\infty\}$ where 
\[
\infty=\bigcup\{\{A_n^\bullet, B_n\}\mid n \in \omega\}.
\]
A simple computation shows that, up to identification of $\alg{A}$ with $e_{\alg{A}}(\alg{A})$
\[
\me(\infty, a_m, b_n)=\me(\infty, a_m, a_n)=\me(\infty, b_m, b_n)=a_{m \vee n}, \qquad m, n \in \omega.
\]

Let us illustrate the inclusion $\iota \subseteq \delta$.  For any $\phi \in  \alg{A}^*$, the subasis clopen subsets $\{x \mid  \phi \in x\}$ and $\{x \mid  \phi \not\in x\}$ of $\alg{A}^\delta_\iota$ are respectively equal to $O_{f}$ and $O_g$ where $f=\{\phi\}$ and $g=\{\phi^\bullet\}$  correspond to morphisms defined on the closed substructure $\{\phi, \phi^\bullet\}$ of $\alg{A}^*$.

Now, let $u\colon\alg{A}\rightarrow \alg{2}$ be the map defined by $u(b_i)=1$ and $u(a_i)=0$ for any $i \in \omega$. Clearly, the map $u$ is not a median homomorphism (neither a $\wedge$-homomorphism). Let us denote by $u'$ the extension of $u$ on $\alg{A}^\delta$ that satisfies $u'(\infty)=0$. We prove that $u'$ is $(\delta, \iota)$-continuous which implies that $u$ is smooth by Proposition \ref{prop:smooth}. We have to prove that $u'^{-1}(0)=\{\infty, a_0, a_1, \ldots\}$ is a $\delta$-open subset of $\alg{A}^\delta$. Consider $K=\{\emptyset, B_0, B_1, B_2, \ldots\}=\bigcap_{i\in \omega}e_{\alg{A}}(a_i)$. It follows from the continuity of ${}^\bullet$ that $K\cup K^\bullet$ is a closed substructure of $\alg{A}^*$. Hence, the map $f:K\cup K^\bullet\rightarrow \str{2}$ defined by $f(x)=0$ if and only if $x \in K$ is a partial morphism on $\alg{A}^*$. It is easily seen that $\infty \in O_f \subseteq u'^{-1}(0)$.
\begin{figure}
\begin{center}

\begin{tikzpicture}
\draw (3,5) node[above]{$A$};
\draw (3,5)-- (3.65,4.35) node[below right]{$B_1^\bullet$};
\draw[dashed] (3,5) -- (1.66, 4.46);
\draw (3,5)-- (3,4.35) node[below]{$B_2^\bullet$};
\draw[dashed] (2.87,3.87)--(2.45,3.45);
\draw (5,3.8)-- (5,3.41);
\draw (4.1,3.93)-- (4.87,3.41);
\draw (5,3.2) node{$A_2$};
\draw (5,2.9)--(5,2.7)node[below]{$A_3$};
\draw (3,3.87)-- (4.91,2.57);
\draw(3,0)-- (2.11,0.89);
\draw (2,1) node{$B_1$};
\draw (3,0)-- (3,0.8);
\draw  (3,1) node{$B_2$};

\draw (1,2.2) -- (1,1.9);
\draw (1,2.36) node{$A_3^\bullet$};
\draw[dashed] (1,2.5)--(1,2.95);
\draw (1,1.55)   -- (1,1.15);
\draw (1,1.68)  node{$A_2^\bullet$};
\draw (1.83,1.12) -- (1.22,1.53);
\draw (2.83,1.12)-- (1.22,2.21);
\draw (2.12,1.12)-- (4.94,3.94);
\draw (5,4) node{$A_1$};
\draw (3.12,1.12)-- (4.94, 2.94);
\draw (3.93,3.93)-- (1.11,1.11);
\draw (4.2,4.52) -- (3,5);
\draw (4.34,4.46) node{$B_0^\bullet$};
\draw (4.46,4.38)-- (4.89,4.08);
\draw (3,0) node[below]{$\emptyset$}-- (1.8,0.46);
\draw (1.7,0.51) node{$B_0$};
\draw[dashed] (3,0) -- (4.3,0.51);
\draw (1.57,0.6)-- (1.12,0.91) ;
\draw (1,1) node{$A_1^\bullet$};
\end{tikzpicture}
\caption{The dual $\alg{A}^*$ of $\alg{A}$.}
\label{fig:dual}
\end{center}
\end{figure}
\end{ex}

\section{Extensions of functions in ordered setting}\label{sec:order}
Given a map $u\colon\alg{A} \to \alg{B}$, Definition \ref{defn:multi} provides a relation (or a multi-map) $\bar{u}\subseteq {\alg{A}_\delta^\delta}\times \alg{B}^\delta_\iota$ that extends $u$. 
In the case of bounded (distributive) lattices  $\alg{A}$ and $\alg{B}$,  the classical technique  \cite{GehHar,Gehrke2004}  adopted to extend $u$  to the  canonical extensions (\emph{i.e.}, profinite completions) of $\alg{A}$ and $\alg{B}$ provides two functions: the lower extension $u^\sigma$ and the upper extension $u^\pi$.  In this section, we  reconcile these two  approaches and prove that in the context of bounded distributive lattices, the multi-extension $\widetilde{u}$ enables us to recover $u^\sigma$ and $u^\pi$, but not conversely. Our approach leads to more general results about varieties of algebras that are $\iota$-locally semilattices (Definition \ref{defn:localsem}).

%

\begin{nota}
Let $\leq$ be a fixed total order on ${M}$. We denote by $\iota{\uparrow}$, respectively $\iota{\downarrow}$, the topologies formed by the upsets, respectively the downsets, of $(M, \leq)$. 
\end{nota}


We can use the total order $\leq$ defined on $M$ to construct  an upper and a lower extension of any map $u\colon \alg{A} \to \alg{B}$.

\begin{defn}\label{defn:localsem}
Let $\alg{A}, \alg{B} \in \var{A}$ and $u:\alg{A}\rightarrow \alg{B}$. We define the maps  $u^\vartriangle\colon \alg{A}^\delta \to M^{\alg{B}^*}$ and $u^\triangledown\colon \alg{A}^\delta \to M^{\alg{B}^*}$ by
\[
u^\triangledown(x) :=\bigwedge \widetilde{u}(x), \qquad u^\vartriangle(x) :=\bigvee \widetilde{u}(x),
\]
for every $x\in \alg{A}^\delta$.
We call $u^\vartriangle$ \emph{the upper extension of $u$}, and $u^\triangledown$ the  \emph{the lower extension of $u$}.
\end{defn}

\begin{lem} \label{lem:00}
If $\alg{A}, \alg{B} \in  \var{A}$ and $u:\alg{A}\rightarrow \alg{B}$, then   $u^\triangledown(x)(\phi)=\bigwedge \widetilde{(\phi \circ u)}(x)$ and $u^\vartriangle(x)(\phi)=\bigvee \widetilde{(\phi \circ u)}(x)$. 
\end{lem}
\begin{proof}
The proof follows from Remark \ref{rem:de}.
\end{proof}

 Theorem \ref{thm:uplowetx} gives sufficient conditions for $u^\vartriangle$ and $u^\triangledown$  to be valued in $\alg{B}^\delta$.
\begin{defn}
An algebra $\alg{A}\in \var{A}$ is a \emph{local meet-semilattice} if for every $b, c \in \alg{A}$ and every $F\Subset \alg{A}^*$,  it holds
$(b\wedge c)\restriction_F \in \alg{A}\restriction_F.$
\emph{Local join-semilattices} are defined dually. A \emph{local lattice} is an algebra of $\var{A}$ that is both a local meet-semilattice and a local join-semilattice.
\end{defn}

\begin{thm}\label{thm:uplowetx}
Let $\alg{A}, \alg{B} \in  \var{A}$ and $u:\alg{A}\rightarrow \alg{B}$. 
\begin{enumerate}
\item\label{it03:01} The map $u^\triangledown\colon \alg{A}^\delta \to M^{\alg{B}^*}$ is a $(\delta, \iota{\uparrow})$-continuous extension of $u$.
\item\label{it03:01bis} The map $u^\vartriangle\colon \alg{A}^\delta \to M^{\alg{B}^*}$ is a  $(\delta, \iota{\downarrow})$-continuous extension of $u$.
 \item\label{it03:04} If $\alg{B}$ is a local meet-semilattice, then the map $u^\triangledown$ is valued in $\alg{B}^\delta$.
\item\label{it03:05} If $\alg{B}$ is a local join-semilattice, then the map $u^\vartriangle$ is valued in $\alg{B}^\delta$.
\item\label{it03:06} If $\alg{B}$ is a local lattice, then $u^\triangledown$ and $u^\vartriangle$ are valued in $\alg{B}^\delta$.
\end{enumerate}
\end{thm}
\begin{proof}

(\ref{it03:01}) Let $F\Subset \alg{B}^*$ and $\alpha\in M^F$. We obtain by Lemma \ref{lem:00} that 
\[(u^\triangledown)^{-1}([F|\geq \alpha])=\bigcap\{\widetilde{\phi\circ u}^{-1}(\square U_\phi)\mid \phi \in F\},\] where $U_\phi:=\alpha(\phi){\uparrow}$  for every $\phi \in F$. Then, the continuity of $u^\triangledown$ follows from Theorem \ref{thm:cont}. The fact that $u^\triangledown$ is an extension of $u$ follows from Proposition \ref{prop:ext}.

(\ref{it03:01bis}) is obtained from (\ref{it03:01})  by duality.

(\ref{it03:04}) Let $x$ be an element of $\alg{A}^\delta$ and let us prove that $u^\triangledown(x)$ is in the closure of $\alg{B}$ in $\alg{M}_\iota^{\alg{B}^*}$. We proceed as in the proof of Theorem \ref{thm:cont} and for any $F \Subset \alg{B}^*$ we choose a $\delta$-neighborhood $W$ of $x$ such that $\widetilde{u}(x)\restriction_F=u(W\cap A)\restriction_F$. The family $u(W\cap A)\restriction_F$ is finite, and since $\alg{B}$ is a local meet-semilattice, there is some $c\in \alg{B}$ such that $u^\triangledown(x)\restriction_F=\bigwedge u(W\cap A)\restriction_F=c\restriction_F$. We have proved that any  $\delta$-neighborhood of $x$ meets $\alg{B}$.

(\ref{it03:05}) is obtained from (\ref{it03:04})  by duality, and (\ref{it03:06}) follows from (\ref{it03:04}) and  (\ref{it03:05}) .
\end{proof}
Given a map $u\colon \alg{L} \to \alg{L}'$ between two bounded distributive lattices $\alg{L}$ and $\alg{L}'$, the theory of canonical extension \cite{Gehrke2004} provides with the \emph{upper extension} $u^\pi\colon \alg{L}^\delta \to \alg{L'}^\delta$ and the \emph{lower extension} $u^\sigma\colon \alg{L}^\delta \to \alg{L'}^\delta$. The following corollary proves that they can be recovered from the multi-extension $\widetilde{u}$ of $u$. 

\begin{cor}\label{cor:lat}
If $\alg{L}$ and $\alg{L'}$  are two bounded distributive lattices and $u:\alg{L} \to \alg{L}'$, then for every $x\in \alg{L}$ it holds $u^\sigma=u^\triangledown$ and $u^\pi=u^\vartriangle$. 
\end{cor}
\begin{proof}
The proof follows from the application of Theorem \ref{thm:uplowetx}  to the variety $\var{A}$ of bounded distributive lattices with $M=\{0,1\}$ ordered in the natural way.
\end{proof}

\begin{ex}

Let $\alg{L}$ be the bounded distributive lattice made of $\omega$ and the finite subsets of $\omega$ with inclusion order. The \textsc{Priestley} dual $\alg{L}^*=\omega\cup\{\infty\}$ is the one point \textsc{Alexandroff} compactification of the antichain $\omega$, with $\infty$ as top element. Hence, $\alg{L}^\delta=2^\omega\cup \{\top\}$ is the power set of $\alg{L}^*\setminus\{\infty\}$ with an additional top element $\top=\alg{L}^*$.
\begin{enumerate}
\item We easily build functions $u:\alg{L}\rightarrow \alg{2}$ that are smooth without being homomorphisms. Indeed let $u$ be the non trivial permutation of $\alg{2}$ and $\phi\in \alg{L}^*$. Then $u\circ\phi:\alg{L}\rightarrow \alg{2}$ is a smooth function that does not belong to $\alg{L}^*$. Other examples are given by the maps $u_A:\alg{L}\rightarrow \alg{2}$ (for $A \subseteq \omega$) that are defined by $u_A(x)=0$ if and only if $x\subseteq A$. If $A$ is infinite and co-infinite then $u_A$ is smooth but not strong.
\item\label{ex:end02} The function $u\colon\alg{L}\rightarrow \alg{2}$ defined by $u(X)=\card{X}\mod 2$ if $X\neq \omega$ and $u(\omega)=1$ is not smooth. Indeed, if $X$ is an infinite proper subset of $\omega$ then $\widetilde{u}(X)=\{0,1\}=[u^\triangledown(x),u^\vartriangle(x)]$.
\item The function $u:\alg{L}\rightarrow \alg{2}^2$ defined by 
\[u(X)=(\card{X} \mod 2, (\card{X}+1)\mod 2), \quad X\neq \omega,
\]
\[u(\omega)=(1,1),\]
is not smooth. Moreover, contrary to example (\ref{ex:end02}), the set $\widetilde{u}(x)$ is not determined by $u^\triangledown(x)$ and $u^\vartriangle(x)$. Indeed, if $X$ is an infinite proper subset of $\omega$ then $\widetilde{u}(X)=\{(0,1), (1,0)\}$ while $u^\triangledown(x)=(0,0)$ and $u^\vartriangle(x)=(1,1)$. 
\item For $k\geq 2$ let $u_k:\alg{L}\rightarrow \alg{L}$ be the function defined by $u_k (X)=(1+\card{X}\mod k)\times X$ for any $X\neq \omega$ and $u_k(\omega)=\omega$. Then $u_k$ is not smooth. Indeed, if $X$ is a proper infinite subset of $\omega$ then
$\widetilde{u_k}(X)=\{X, 2\times X, \ldots, k\times X\}$. Moreover,  we have $\widetilde{u_l\circ u_k}=\widetilde{u_l}\circ \widetilde{u_k}$ if and only if $l$ and $k$ are coprime. 
\end{enumerate}
\end{ex}

\section{Concluding remarks and further research}
In this paper, we have considered the question of extending functions between algebras to their profinite completions in the setting of finitely generated quasivarieties. Our answer is only partly satisfactory as we provide  an extension which is a multi-map rather than a function. This multi-extension has strong continuity properties and there are interesting cases in which it turns out to be a function. Moreover, the construction of the multi-extension shed lights (Corollary \ref{cor:lat}) on the existence of two canonical extensions in the bounded distributive lattice setting.

We now identify some topics of further research.

\begin{enumerate}
\item Topology $\delta$ (Definition \ref{defn:topo}) is one of the possible topologies in which $\alg{A}$ can be defined as the algebra of isolated points of $\cate{X}^b(\alg{A}^*, \utilde{M})$ and is duality dependent. A general study of the topologies that enjoy this property would lead to other multi-extensions which could be `closer' to a function than the relation $\widetilde{u}$ considered in this work.

\item  Sufficient conditions for $\widetilde{u}$ to be smooth are needed.

\item Canonical extensions have proved to be a useful tool to look for \textsc{Kripke} complete modal logics. Fields of applications of the techniques developed  in this paper should be found outside the lattice-based setting
%

\item Median algebras and median semilattices are equivalent. Natural extensions of median algebras and canonical extensions of their median semilattices \cite{Gouveia2014} should be compared. This constitutes a topic of current investigation.

\end{enumerate}


\begin{thebibliography}{10}
\bibitem{Avann1961}
S. P. Avann, 
\newblock {Metric ternary distributive semi-lattices.} 
\newblock{\em Proceedings of the American Mathematical Society},12:407--414, 1961.

\bibitem{Bandelt1983}
H.~J.~Bandelt and J.~Hedl\'{\i}kov\'{a}.
\newblock {Median algebras}.
\newblock {\em Discrete mathematics}, 45:1--30, 1983.

\bibitem{Birkhoff1947}
G. Birkhoff and S.~A. Kiss.
\newblock A ternary operation in distributive lattices.
\newblock {\em Bulletin of the American Mathematical Society}, 53:749--752,
  1947.

\bibitem{Cavus2012}
Rukiye Cavus.
\newblock Dualities induced by canonical extensions, June 2012.
\newblock http://people.maths.ox.ac.uk/hap/CavusAbs.pdf.

\bibitem{Davey1983}
B.~M. Davey and H.~Werner.
\newblock Dualities and equivalences for varieties of algebras. 
\newblock In {\em Contributions to lattice theory ({S}zeged, 1980)},
  volume~33 of {\em Colloquia Mathematica Societatis János Bolyai}, pages
  101--275. North-Holland, Amsterdam, 1983.

\bibitem{NDFWA}
David~M. Clark and Brian~A. Davey.
\newblock {\em Natural dualities for the working algebraist}, volume~57 of {\em
  Cambridge Studies in Advanced Mathematics}.
\newblock Cambridge University Press, Cambridge, 1998.

\bibitem{Davey2011}
B.~A. Davey, M.~J. Gouveia, M.~Haviar, and H.~A. Priestley.
\newblock Natural extensions and profinite completions of algebras.
\newblock {\em Algebra Universalis}, 66(3):205--241, 2011.

\bibitem{Davey2012bis}
B.~A. Davey and H.~A. Priestley.
\newblock Canonical extensions and discrete dualities for finitely generated
  varieties of lattice-based algebras.
\newblock {\em Studia Logica}, 100(1-2):137--161, 2012.

\bibitem{Davey2012ter}
B.~A. Davey and H.~A. Priestley.
\newblock Canonical extensions and discrete dualities for finitely generated
  varieties of lattice-based algebras.
\newblock {\em Studia Logica}, 100(1-2):137--161, 2012.

\bibitem{Davey2012}
Brian~A. Davey, Miroslav Haviar, and Hilary~A. Priestley.
\newblock Natural dualities in partnership.
\newblock {\em Applied Categorical Structures}, 20:583--602, 2012.

\bibitem{GehHar}
Mai Gehrke and John Harding.
\newblock Bounded lattice expansions.
\newblock {\em J. Algebra}, 238(1):345--371, 2001.

\bibitem{Geh0}
Mai Gehrke and Bjarni J{\'o}nsson.
\newblock Bounded distributive lattices with operators.
\newblock {\em Math. Japon.}, 40(2):207--215, 1994.

\bibitem{Geh00}
Mai Gehrke and Bjarni J{\'o}nsson.
\newblock Monotone bounded distributive lattice expansions.
\newblock {\em Math. Japon.}, 52(2):197--213, 2000.

\bibitem{Gehrke2004}
Mai Gehrke and Bjarni J{\'o}nsson.
\newblock Bounded distributive lattice expansions.
\newblock {\em Math. Scand.}, 94(1):13--45, 2004.

\bibitem{Gehrke2011}
Mai Gehrke and Jacob Vosmaer.
\newblock A view of canonical extension.
\newblock In {\em Logic, language, and computation}, volume 6618 of {\em
  Lecture Notes in Comput. Sci.}, pages 77--100. Springer, Heidelberg, 2011.

\bibitem{Gouveia2014}
M.J. Gouveia and H.A. Priestley.
\newblock Canonical extensions and profinite completions of semilattices and
  lattices.
\newblock {\em Order}, 31(2):189--216, 2014.

\bibitem{Grau1947}
A.~A. Grau.
\newblock Ternary {B}oolean algebra.
\newblock {\em Bull. Amer. Math. Soc.}, 53:567--572, 1947.

\bibitem{Hansoul1984}
G.~Hansoul and L.~Vrancken-Mawet.
\newblock {D\'ecompositions Bool\'eennes de lattis distributifs born\'es.}
\newblock {\em Bull. Soc. R. Sci. Li\`ege}, 53:88--92, 1984.

\bibitem{Harding2006}
John Harding.
\newblock On profinite completions and canonical extensions.
\newblock {\em Algebra Universalis}, 55(2-3):293--296, 2006.
\newblock Special issue dedicated to Walter Taylor.

\bibitem{Isbell1980}
John~R. Isbell.
\newblock Median algebra.
\newblock {\em Trans. Amer. Math. Soc.}, 260(2):319--362, 1980.

\bibitem{JonSah}
Bjarni J{\'o}nsson.
\newblock On the canonicity of {S}ahlqvist identities.
\newblock {\em Studia Logica}, 53(4):473--491, 1994.

\bibitem{JonTar1}
Bjarni J{\'o}nsson and Alfred Tarski.
\newblock Boolean algebras with operators. {I}.
\newblock {\em Amer. J. Math.}, 73:891--939, 1951.

\bibitem{JonTar2}
Bjarni J{\'o}nsson and Alfred Tarski.
\newblock Boolean algebras with operators. {II}.
\newblock {\em Amer. J. Math.}, 74:127--162, 1952.

\bibitem{Sholander1954}
M~Sholander.
\newblock {Medians, lattices, and trees}.
\newblock {\em Proceedings of the American Mathematical Society},
  5(5):808--812, 1954.

\bibitem{Werner1981}
H.~Werner.
\newblock A duality for weakly associative lattices.
\newblock In {\em Finite algebra and multiple-valued logic ({S}zeged, 1979)},
  volume~28 of {\em Colloq. Math. Soc. J\'anos Bolyai}, pages 781--808.
  North-Holland, Amsterdam, 1981.

\end{thebibliography}

\appendix
\section{Profinite completions of Boolean products}
The generalization of  Theorem \ref{thm:prod} to Boolean products depends on the possibility to express emptiness in the dual space in terms of formulas in the algebra, as seen in the next result. Recall the following notation: if $a\in\alg{A}$ and $m\in\alg{M}$ we denote by $[a:m]$ the set $\{\psi \in \cate{A}(\alg{A}, \alg{M})\mid \psi(a)=m\}$. The family $\{[a:m]\mid a\in \alg{A}, m\in\alg{M}\}$ is a basis of clopen subsets of $\alg{A}^*$.

The following theorem generalizes the developments in \cite{Hansoul1984} about Boolean products of bounded distributive lattices.

\begin{thm}\label{thm:bool_prod}
Assume that $\utilde{M}$ yields a logarithmic duality for $\var{A}$ and that $ \alg{M}$ is of finite type. Let $\alg{A}$  be a Boolean product of the family $(\alg{A}_i)_{i\in I}$ of algebras of $\var{A}$. If for every $n\in \N$ and every $m_1, \ldots, m_n\in M$ there is an open formula $\phi(x_1, \ldots, x_n)$ in the language of $\alg{M}$ such that for every $i \in I$ and every $a_1, \ldots, a_n \in \alg{A}_i$, it holds
\[
\alg{A}_i^*=\bigcup_{\lambda=1}^{n}[a_\lambda: m_\lambda]\quad  \iff \quad \alg{A}_i\models\phi(a_1, \ldots, a_n),
\]
then $\alg{A}^\delta$ is $\var{A}_\iota$-isomorphic to $\prod_{i\in I}\alg{A}_i^\delta$. 
\end{thm}

\begin{proof}
Let $f: \alg{A}\hookrightarrow \prod_{i\in I} \alg{A}_i$ be a Boolean representation of the family $(\alg{A}_i)_{i\in I}$ of algebras of $\cate{A}$. For every $i\in I$  we denote by $\rho_i$ the embedding $(\pi_i)^*:A_i^* \hookrightarrow \amalg\{\alg{A}_i^* \mid i \in I\}$ where $\pi_i$ denotes the projection map from $\prod_{i\in I}A_i$ onto its $i$-th factor $\alg{A}_i$, \emph{i.e.}, $\rho_i$ is the map defined by $\rho_i(\psi)=\psi\circ\pi_i$. Let $X$ be the set $\bigcup\{\rho_i(\alg{A}_i^*)\mid i \in I\}$. Since $\utilde{M}$ yields a logarithmic duality for $\var{A}$, it is not difficult to see that $\bigcup\{\rho_i(\alg{A}_i^*) \mid i \in J\}$ is isomorphic to $\amalg\{\alg{A}_i^* \mid i \in J\}$ for every finite subset $J$ of $I$. It follows that $X$ is a (not necessarily closed) substructure of $\amalg\{A_i^*\mid i \in I\}$ (such a verification involves only finitely many terms $\rho_i(\alg{A}_i^*)$). In particular, $X$ can be seen as
\begin{equation}\label{eqn:sumbb}
X=\amalg\{(A_i^*)^\flat \mid i \in I\}.
\end{equation}
We are going to prove that $X$ can be equipped with a Boolean topology $\tau$ to obtain a topological structure that is isomorphic to $A^*$ and that is embeddable into $\amalg\{\alg{A}_i^*\mid i \in I\}$.

We define the topology $\tau$ on $X$ as the topology generated by the sets
\[
[a:m]=\bigcup\{[\pi_i(f(a)):m]\mid i \in I\}, \quad a \in \alg{A}, m \in \alg{M}.
\]
The topology $\tau$ is clearly finer than the topology induced on $X$ by $\amalg\{\alg{A}_i^* \mid i \in I\}$. Let us show that $\struc{X,\tau}$ is Boolean. It suffices to prove that it is compact. Assume that $X=\bigcup\{[a_\lambda:m_\lambda]\mid \lambda \in L\}$ for some $a_\lambda\in \alg{A}$ and $m_\lambda \in \alg{M}$. For every $i \in I$, the family $\{[\pi_i(f(a_\lambda)): m_\lambda]\mid \lambda \in L\}$ is an open covering of $\rho_i(\alg{A}_i^*)$ and there is a finite subset $L_i$ of $L$ such that
\begin{equation}\label{eq:sqdf}
\rho_i(A_i^*)=\bigcup \{[\pi_i(f(a_\lambda)): m_\lambda]\mid \lambda \in L_i\}.
\end{equation}
By hypothesis, for every $i\in I$ there is an open formula formula $\phi_{in_i}$ with $n_i$ variables  (where $n_i$ denotes $\card{L_i}$) such that
identity (\ref{eq:sqdf}) is equivalent to
\begin{equation}\label{eq:sqdf2}
\alg{A}_i \models \phi_{in_i}((\pi_i(a_{\lambda}))_{\lambda  \in L_i}).
\end{equation}
Now, for every $i\in I$ let $\Omega_i$ be  defined by
\[
\Omega_i=\{j \in I \mid \alg{A}_j \models\phi_{in}((\pi_j(f(a_{\lambda})))_{\lambda  \in L_i})\}.
\]
The family $\{\Omega_i\mid i \in I\}$ is an open covering of $I$. By compactness, there is a finite subset $J$ of $I$ such that
\begin{equation}\label{eq:sqdf3}
I=\bigcup\{\Omega_j \mid j \in J\}.
\end{equation}
By combining (\ref{eq:sqdf2}) and (\ref{eq:sqdf3}), we obtain,
\[
X=\bigcup\{\bigcup\{[a_\lambda:m_\lambda]\mid \lambda \in L_j\}\mid j \in J\},
\]
which is a finite open covering of $X$ extracted from $\{[a_\lambda: m_\lambda]\mid \lambda \in L\}$.

Let us denote by $g$ the restriction of $f^*$ to $X$. Hence, for any $\rho_i(\psi)\in \rho_i(\alg{A}_i)$, we have $g(\rho_i(\psi))=\psi\circ\pi_i\circ f$. We aim to prove that $g$ is an $\cate{X}$-isomorphism between $\struc{X,\tau}$ and $\alg{A}^*$.

First we prove that $g$ is a $\cate{X}^\flat$-embedding. We have to prove that if  $r$ represents an $n$-ary relation or the graph of a (partial) operation  in the language of $\utilde{M}$ and if $\psi_1, \ldots, \psi_n \in X$, we have the following equivalence
\begin{equation}\label{eqn:zpygoi}
(\psi_1, \ldots, \psi_n) \in r^{X} \Leftrightarrow (g(\psi_1), \ldots, g(\psi_n))\in r^{\alg{A}^*}.
\end{equation}

Let $J$ be a finite subset of $I$ such that $\{\psi_1, \ldots, \psi_n\}\subseteq \bigcup\{\rho_j(\alg{A}_j^*)\mid j \in J\}$. Let us denote by $Y$ the latter set. We have already noted  that $Y$, considered as a substructure of $\amalg\{A_i^*\mid i \in I\}$  is isomorphic to $\amalg\{A_j^*\mid j \in J\}$. Since $f:A\hookrightarrow \prod_{i \in I}A_i$ is a Boolean representation of $A$, the map $f_J:A\rightarrow \prod_{j\in J}A_j:a \mapsto (\pi_j(a))_{j \in J}$ {is onto}. Hence, the dual map $f_J^*:Y \rightarrow A^*$ is an embedding and is clearly equal to the restriction of $g$ to $Y$. Then, it follows successively
\begin{eqnarray*}
(\psi_1, \ldots, \psi_n)\in r^X & \Leftrightarrow & (\psi_1, \ldots, \psi_n)\in r^Y\\
& \Leftrightarrow & (f_J^*(\psi_1), \ldots, f_J^*(\psi_n))\in r^{\alg{A}^*}\\
& \Leftrightarrow & (g(\psi_1), \ldots, g(\psi_n))\in r^{\alg{A}^*},
\end{eqnarray*}
which establishes equivalence (\ref{eqn:zpygoi}), as required.

Finally, since $g$ is the restriction on $X$ of a continuous map, it is a continuous map for the induced topology on $X$. From the fact that $\tau$ is finer than the induced topology we eventually conclude that $g:\struc{X,\tau}\rightarrow A^*$ is an $\cate{X}$-embedding. We deduce that $\struc{X,\tau}\in \cate{X}$.

For the last part of the proof, we show that the evaluation map
\[
h:\alg{A}\rightarrow \cate{X}(X, \utilde{M}):a \mapsto h(a): \rho_i(\psi)\mapsto \psi(\pi_i(f(a)))
\]
is an isomorphism. It is clearly a homomorphism. Moreover, if $a, b \in \alg{A}$ and $a\neq b$ then there is an $i \in I$ such that $\pi_i(f(a))\neq\pi_i(f(b))$, \emph{i.e.}, such that $e_{\alg{A}_i}(\pi_i(f(a)))\neq e_{\alg{A}_i}(\pi_i(f(b)))$. Let $\psi \in \alg{A}_i^*$ with  $e_{\alg{A}_i}(\pi_i(f(a)))(\psi)\neq e_{\alg{A}_i}(\pi_i(f(b)))(\psi)$. It means that $\psi(\pi_i(f(a)))\neq \psi(\pi_i(f(b)))$ which proves that $h$ is one-to-one. Moreover, since $h^*=g$ and since $g$ is an embedding, we deduce that $h$ is onto and so, is an isomorphism.

Hence, it follows successively that 
\[
\alg{A}^\delta\simeq \cate{X}^\flat(\alg{A}^*, \utilde{M})\simeq \cate{X}^\flat(X, \utilde{M})\simeq \cate{X}^\flat(\amalg_{i\in I}(\alg{A}_i^*)^\flat, \utilde{M}),
\]
where we have used (\ref{eqn:sumbb}) to obtain the latter isomorphism. Then, we obtain
\[
 \cate{X}^\flat(\amalg_{i\in I}(\alg{A}_i^*)^\flat, \utilde{M}) \simeq \prod_{i\in I}\cate{X}^\flat(\alg{A}_i^*, \utilde{M})\simeq \prod_{i\in I}\alg{A}_i^\delta
\]
where the first isomorphism is obtained by partnership duality  \cite[Theorem 2.4]{Davey2012}) and {is also an $\var{A}_\iota$-isomorphism}.
\end{proof}

\end{document}